\title{Stabilization rates for the damped wave equation with H\"older-regular damping}
\author{Perry Kleinhenz}
\date{}							
\theoremstyle{definition}
\newtheorem{definition}{Definition}
\theoremstyle{theorem}
\newtheorem{theorem}{Theorem}[section]
\newtheorem{lemma}[theorem]{Lemma}
\newtheorem{proposition}[theorem]{Proposition}
\newcommand{\Rb}{\mathbb{R}}
\newcommand{\Zb}{\mathbb{Z}}
\newcommand{\C}{\mathbb{C}}
\newcommand{\Nb}{\mathbb{N}}
\newcommand{\ra}{\rightarrow}
\newcommand{\e}{\varepsilon}
\renewcommand{\d}{\delta}
\newcommand{\nm}[1]{\left| \left| #1 \right| \right|}
\newcommand{\lp}[2]{ \nm{#1}_{L^{#2}}}
\newcommand{\hp}[2]{\nm{#1}_{H^{#2}}}
\newcommand{\Del}{\Delta}
\newcommand{\Cs}{C^{\infty}_0}
\newcommand{\p}{\partial}
\newcommand{\Ci}{C^{\infty}}
\newcommand{\T}{\mathbb{T}}
\newcommand{\ti}{\widetilde}
\newcommand{\ltb}[1]{\nm{#1}_{L^2(0,b)}}
\newcommand{\Rl}{\text{Ref}(\lambda_h,h)}
\newcommand{\Rm}{\text{Ref}(\mu_h,h)}
\begin{document}
\maketitle
\begin{abstract}
We study the decay rate of the energy of solutions to the damped wave equation in a setup where the geometric control condition is violated. We consider damping coefficients which are $0$ on a strip and vanish like polynomials, $x^{\beta}$. We prove that the semigroup cannot be stable at rate faster than $1/t^{(\beta+2)/(\beta+3)}$ by producing quasimodes of the associated stationary damped wave equation. We also prove that the semigroup is stable at rate at least as fast as $1/t^{(\beta+2)/(\beta+4)}$. These two results establish an explicit relation between the rate of vanishing of the damping and rate of decay of solutions. 
Our result partially generalizes a decay result of Nonnemacher in which the damping is an indicator function on a strip. 
\end{abstract}

\section{Introduction}

Let $M=(M,g)$ be a Riemannian manifold. Fix some $W \in L^{\infty}(M), W \geq 0$. We study the asymptotic behavior as $t \ra \infty$ of solutions to the damped wave equation 
\begin{equation} \label{DWE}
\begin{cases}
\p_t^2  u - \Del u + W(x) \p_t u = 0 & \text{in } M \times \Rb^+  \\
(u,\p_t u) |_{t=0} = (u_0, u_1) & \text{in } M \\
u|_{\p M}=0  &\text{if } \p M \neq \emptyset.
\end{cases}
\end{equation}
The quantity of particular interest is the energy 
\begin{equation}
E(u,t) = \frac{1}{2} \left(\lp{\nabla u(\cdot, t)}{2}^2 + \lp{\p_t u(\cdot, t) }{2}^2 \right).
\end{equation}
In this paper we will work on the square $M =[-b,b] \times [-b,b]$ or torus $M=\T^2$, which we parametrize by $(x,y)$. We will give detailed proofs in the case of the square then show how those results can be applied to the torus. 

For some fixed $\beta \geq 0$ and $a,\sigma>0$, such that $a+\sigma<b$, we study damping $W \in L^{\infty}(M)$ of the form
\begin{equation}\label{dampingdef}
W(x,y) = \begin{cases}
0 & |x|<a\\
(|x|-a)^{\beta} & a<|x|<a+\sigma \\
c(|x|)>0 & a+\sigma< |x|<b, 
\end{cases}
\end{equation}
In particular note that the damping is invariant in the $y$ direction. 

\textbf{Remark.} The particular form of $c(x)$ does not affect our result so long as $c(x) \in L^{\infty}(a+\sigma,b)$ and it is uniformly bounded away from 0. However choosing a $c$ such that $W$ is smooth for $|x|>a$ and $W=C>0$ for $|x|>a+2\sigma$ is perhaps the most interesting case in the context of existing results. 

\begin{definition}
Let $f(t)$ be a function such that $f(t) \ra 0$ as $t \ra \infty$. We say that \eqref{DWE} is stable at rate $f(t)$ if there exists a constant $C>0$ such that for all $(u_0, u_1) \in (H^2(M) \cap H_0^1(M)) \times H_0^1(M)$ (or
$H^2(M) \times H^1(M)$ if $\p M =\emptyset$) if $u$ solves \eqref{DWE} with $(u_0, u_1)$ as Cauchy data then
$$
E(u,t) \leq C f(t)^2 \left( \nm{u_0}_{H^2(M)}^2 + \nm{u_1}_{H^1(M)}^2 \right)\quad \text{ for all } t >0. 
$$
\end{definition}
Our main result is 
\begin{theorem}\label{mainresult}
For  all $\e>0$, with $W$ as in \eqref{dampingdef} the equation \eqref{DWE} cannot be stable at rate 
$$
t^{-\frac{\beta+2}{\beta+3}-\e}.
$$
\end{theorem}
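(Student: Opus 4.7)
My strategy is to convert the non-stability statement into the construction of a quasimode for the stationary operator, and then to build such a quasimode via a matched-asymptotics WKB analysis in one spatial variable. Recall the Borichev--Tomilov characterization: for a bounded $C_0$-semigroup on a Hilbert space with generator $\Ac$, stability at rate $t^{-\gamma}$ on $D(\Ac)$ is equivalent to $\nm{(i\lambda-\Ac)^{-1}}=O(|\lambda|^{1/\gamma})$ as $|\lambda|\to\infty$. For the damped wave equation this corresponds to an $L^2$ resolvent bound for the stationary operator $P_\lambda := -\Del - \lambda^2 + i\lambda W$. Thus, to rule out stability at rate $t^{-(\beta+2)/(\beta+3)-\e}$ for every $\e>0$, it is enough to produce, for each $\e>0$, a sequence of quasimodes $(u_h,f_h)$ with $\ltwo{u_h}=1$ and $\ltwo{f_h}=O(\lambda^{-(\beta+3)/(\beta+2)+\e})$ along some $\lambda\to\infty$.

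Exploiting the $y$-invariance of $W$, I would first separate variables $u_h(x,y)=e^{iky}v(x)$, with $k\in\Zb$ on $\T^2$ (and a Dirichlet-adapted label on the square), reducing the problem to a one-dimensional quasimode construction for $L_\lambda v := -v'' - \mu^2 v + i\lambda W(x) v$, where $\mu^2 := \lambda^2-k^2$. The natural scale in the transition layer $a<|x|<a+\sigma$ comes from balancing $v''$ against $i\lambda (|x|-a)^\beta v$, which gives the boundary-layer width $\delta := \lambda^{-1/(\beta+2)}$. Choosing $\mu\delta$ of order one (so $\mu\sim\lambda^{1/(\beta+2)}$) and rescaling $s=(|x|-a)/\delta$ brings the equation in the layer to a $\lambda$-independent reference ODE of the form $F''+(1-is^\beta)F=0$. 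A WKB analysis of this reference equation produces a decaying branch $F_-(s)\sim s^{-\beta/4}\exp(-c_\beta s^{(\beta+2)/2})$ and a growing branch $F_+$, together with connection coefficients relating the Taylor data at $s=0$ to the WKB asymptotics at infinity.

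I would then assemble the quasimode by matched asymptotics: a trigonometric mode $\cos(\mu x)$ or $\sin(\mu x)$ on $|x|<a$, a scalar multiple of $F_-((|x|-a)/\delta)$ on the layer $a<|x|<a+\sigma$ matched to the interior mode at $|x|=a$, and a smooth cutoff to zero before $|x|=b$. Computing $f_h = L_\lambda v$ will expose two competing residuals: a commutator contribution from the cutoff, bounded by the already-small decaying tail of $F_-$, and a matching residual at $|x|=a$, reflecting the fact that a single decaying WKB branch cannot exactly reproduce both $v(\pm a)$ and $v'(\pm a)$ for real $\mu$. I would select $\mu$ from a spaced sequence of near-Dirichlet values $\mu_n a\approx(n+\tfrac12)\pi$, at which the matching residual is as small as possible, and balance it against the cutoff commutator to arrive at the target bound $\ltwo{f_h}/\ltwo{u_h} = O(\lambda^{-(\beta+3)/(\beta+2)+\e})$ along $\lambda_n\to\infty$; the construction on $\T^2$ is identical, taking $k\in\Zb$. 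The hard part will be the semiclassical analysis of the reference ODE $F''+(1-is^\beta)F=0$: obtaining sufficiently sharp WKB asymptotics, computing explicit connection coefficients between the decaying/growing branches at $\infty$ and the Taylor basis at $s=0$, and performing the optimization carefully enough that the precise exponent $(\beta+3)/(\beta+2)$ — rather than some crude power — emerges from the balance of the two error contributions.
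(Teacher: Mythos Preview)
Your overall architecture — separate variables, reduce to a one-dimensional problem, rescale in the transition layer to a reference ODE, match a trigonometric interior mode to a decaying layer solution, and cut off near $x=b$ — is exactly the paper's strategy. The gap is in your choice of transverse wavenumber.

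You set $\mu\delta\sim 1$, i.e.\ $\mu\sim\lambda^{1/(\beta+2)}$, which produces the reference equation $F''+(1-is^{\beta})F=0$. With this scaling the matching condition at $x=a$ reads $-\tan(\mu a)=F_-'(0)/F_-(0)$, a fixed \emph{nonreal} number. Whether you keep $\mu$ real and accept a residual, or allow $\mu$ complex and match exactly, the upshot is $\operatorname{Im}\mu=O(1)$ (equivalently, a derivative jump of size $\sim\mu$ that cannot be made small by choosing $\mu a\approx(n+\tfrac12)\pi$). Feeding this back through $\lambda^2=k^2+\mu^2$ gives $\operatorname{Im}\lambda\sim\operatorname{Re}(\mu)\operatorname{Im}(\mu)/\lambda\sim\lambda^{-(\beta+1)/(\beta+2)}$, which via Borichev--Tomilov only excludes stability at rate $t^{-(\beta+2)/(\beta+1)-\e}$ — strictly weaker than the target $t^{-(\beta+2)/(\beta+3)-\e}$.

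The paper instead takes $\mu=O(1)$ (a fixed low $x$-mode, high $y$-frequency), so that $\mu\delta\to 0$ and the reference ODE is $F''=ix^{\beta}F$ with a vanishing spectral parameter. Then the interior mode $\sin(\mu x)$ at $x=a$ has value $O(\delta)$ and derivative $O(1)$, matching the layer solution whose value/derivative ratio is also $O(\delta)$; the complex correction to $\mu$ needed for exact matching is only $O(\delta)$, yielding $\operatorname{Im} q\sim q^{-(\beta+3)/(\beta+2)}$ and the sharp exponent. The paper also sidesteps WKB and connection coefficients entirely: existence and uniform control of the decaying layer solution come from Lax--Milgram, and exact matching is obtained by the implicit function theorem in the small parameter $h^{2/(\beta+2)}$. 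Your plan can be salvaged by switching to the $\mu=O(1)$ regime; as written, the scaling choice prevents you from reaching $(\beta+2)/(\beta+3)$.
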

More precisely if we use $m_1(t)$ to denote the best possible $f$ for which definition 1 holds then this result along with Lemma 4.6 of \cite{AnantharamanLeautaud2014} and Proposition 3 of \cite{BattyDuyckaerts2008} show that $m_1(t) \geq C/(1+t)^{\frac{\beta+2}{\beta+3}}$ for some $C>0$, where we use the notation of \cite{BattyDuyckaerts2008}.

We also show that 
\begin{theorem}\label{lowerbound}
For $W$ as in \eqref{dampingdef} with $\beta>0$ the equation \eqref{DWE} is stable at rate 
$$
t^{-\frac{\beta+2}{\beta+4}}.
$$
\end{theorem}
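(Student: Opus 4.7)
My plan is to reduce Theorem \ref{lowerbound} to a high-frequency resolvent estimate via the Borichev--Tomilov characterization invoked in Proposition 3 of \cite{BattyDuyckaerts2008}: stability at rate $t^{-(\beta+2)/(\beta+4)}$ is equivalent to
\[
\|(i\mu - \mathcal{A})^{-1}\|_{\mathcal{H}\to \mathcal{H}} \lesssim |\mu|^{(\beta+4)/(\beta+2)} \qquad \text{for } |\mu| \gg 1,
\]
where $\mathcal{A}$ is the semigroup generator on the energy space $\mathcal{H}$. Unwinding the resolvent identity, this reduces to a scalar estimate for the damped Helmholtz operator $P_\mu := -\Delta - \mu^2 + i\mu W$: if $P_\mu u = h$, then
\[
|\mu|^2\|u\|_{L^2}^2 + \|\nabla u\|_{L^2}^2 \lesssim |\mu|^{4/(\beta+2)}\,\|h\|_{L^2}^2.
\]

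I would then exploit the $y$-invariance of $W$ by Fourier-decomposing in $y$. Writing $u(x,y)=\sum_k u_k(x)e_k(y)$, each mode satisfies
\[
-u_k''(x) - \nu_k^2 u_k(x) + i\mu W(x) u_k(x) = h_k(x), \qquad \nu_k^2 := \mu^2 - k^2,
\]
on $[-b,b]$, so by Parseval it suffices to prove 1D estimates uniform in $k$. Elliptic modes $\nu_k^2 \leq 0$ yield $\|u_k\|_{L^2}\lesssim \|h_k\|_{L^2}$ by direct integration by parts, while modes with $|\nu_k|$ comparable to $|\mu|$ are handled by a propagation/multiplier argument starting from the strongly damped region $|x|>a+\sigma$. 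The critical regime is when $\nu_k$ is small but positive.

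For the critical 1D problem I would combine two multiplier identities. Pairing $P_\mu u_k = h_k$ with $\overline{u_k}$ and taking imaginary parts yields the damping estimate
\[
\mu\int_{-b}^{b} W(x)|u_k(x)|^2\,dx \lesssim \|h_k\|_{L^2}\|u_k\|_{L^2},
\]
which controls $u_k$ on $\{W>0\}$ but not on the undamped strip $|x|<a$. To transfer mass out of the strip, multiply the equation by $\chi(x)\,\overline{u_k'(x)}$ for an odd cutoff $\chi$ supported in $|x|<a+\sigma$ with $\chi(x)=x$ on $|x|\leq a$, and take real parts; integration by parts produces an interior positive term controlling $\int_{-a}^{a}(|u_k'|^2+\nu_k^2|u_k|^2)$, plus a transition-layer term on $a<|x|<a+\sigma$ involving $\chi' W|u_k|^2$. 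Bounding the layer contribution by the damping integral on the scale at which $\mu W(x)$ matches $\nu_k^2$ delivers the 1D loss $|\mu|^{2/(\beta+2)}$, which is exactly what is required.

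The main obstacle will be executing this last balancing step with the correct Hölder rate. The cutoff $\chi$ and the splitting of the transition zone $a<|x|<a+\sigma$ must be arranged so that the layer integrals are absorbed precisely by the already-bounded damping integral, without introducing extra powers of $\mu$; this is where the exponent $\beta$ enters sharply, and it reflects the fact that a full wavelength of the quasi-resonant mode can cross the undamped strip before feeling the damping. Once the 1D estimate is established uniformly in $k$, summing via Parseval gives the resolvent bound on the square, and the torus case follows by the reduction indicated in the introduction.
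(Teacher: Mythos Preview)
Your reduction to a Helmholtz resolvent estimate is correct in spirit, but the scalar target you wrote is off by a factor of $|\mu|^2$: the energy bound $\|(i\mu-\mathcal A)^{-1}\|\lesssim|\mu|^{(\beta+4)/(\beta+2)}$ unwinds (via Lemma~4.6 of \cite{AnantharamanLeautaud2014}) to $\|u\|_{L^2}\lesssim|\mu|^{2/(\beta+2)}\|h\|_{L^2}$, i.e.\ $\|u\|_{L^2}^2\lesssim|\mu|^{4/(\beta+2)}\|h\|_{L^2}^2$, not $|\mu|^2\|u\|^2+\|\nabla u\|^2\lesssim|\mu|^{4/(\beta+2)}\|h\|^2$. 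Your version would give $\|u\|\lesssim|\mu|^{-\beta/(\beta+2)}\|h\|$, a resolvent that \emph{decays} along the real axis, which is impossible here since GCC fails. This looks like a bookkeeping slip, but it propagates: the ``1D loss $|\mu|^{2/(\beta+2)}$'' you aim for in the mode-by-mode estimate has to be a loss in $\|u_k\|/\|h_k\|$, not in $|\mu|\,\|u_k\|/\|h_k\|$.

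Beyond that, your route differs substantially from the paper's. The paper does \emph{not} Fourier-decompose in $y$ and does not use a Morawetz-type multiplier $\chi\,\overline{u'}$. It works directly in two dimensions and invokes the observability estimate of Burq--Zworski (\cite{BurqZworski2004}, Proposition~6.1) applied to $\chi_q u$, where $\chi_q(x)=\chi\bigl(q^{\gamma}W(x)\bigr)$ is a \emph{frequency-dependent} cutoff. The commutator terms $\chi_q''u$, $\chi_q'u$ and the damping term $qW\chi_q u$ are all bounded by powers of $q$ times $\|W^{1/2}u\|_{L^2}^2$, and the exponent $\gamma=\beta/(\beta+2)$ is chosen precisely so that the worst of these powers match; this is what produces $q^{2/(\beta+2)}$. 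The region $(1-\chi_q)u$ is then handled trivially since $W\gtrsim q^{-\gamma}$ there.

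Your fixed, $q$-independent cutoff in the multiplier $\chi\,\overline{u_k'}$ does not obviously reproduce this balance. The cross term $\mu\!\int \chi W\,\mathrm{Im}(u_k\overline{u_k'})$ lives on the whole layer $a<|x|<a+\sigma$, and estimating it via $\|W^{1/2}u_k\|\,\|u_k'\|$ does not by itself isolate the scale $|x|-a\sim q^{-1/(\beta+2)}$ that governs the sharp exponent; nor does the positive interior term $\int_{-a}^{a}\nu_k^2|u_k|^2$ control $\|u_k\|_{L^2(-a,a)}$ uniformly across all $k$. The step you flag as the ``main obstacle'' is exactly the heart of the matter, and the paper resolves it not by a multiplier identity on a fixed layer but by letting the cutoff itself shrink with $q$ according to $W$. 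If you want to push your 1D approach through, you will almost certainly need to make your cutoff $q$-dependent in the same way.
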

Again using Lemma 4.6 of \cite{AnantharamanLeautaud2014} and Proposition 3 of \cite{BattyDuyckaerts2008} this shows  $m_1(t) \leq C/(1+t)^{\frac{\beta+2}{\beta+4}}$ for some $C>0$. 

The decay of energy of the damped wave equation is a well studied question. The strongest possible decay is uniform stabilization, which is defined as the existence of $F(t) \ra 0$ as $t \ra \infty$ such that, for all $u$ solving \eqref{DWE} with initial data in $H^1(M) \times L^2(M)$ 
$$
E(u,t) \leq F(t) E(u,0), \quad t \geq 0.
$$  

It was established by \cite{RauchTaylor1975} that uniform stabilization occurs with $F=Ce^{- \kappa t}$ for some $\kappa, C>0$, when $\p M = \emptyset$, $W \in C^0(M)$ and $\text{supp }W$ satisfies the geometric control condition (GCC).  We recall that a set $U$ satisfies the GCC if there exists $T>0$ such that for every geodesic $\gamma$ on $M$ of length $T$,  $\gamma \cap U \neq \emptyset$. The reverse implication, that uniform stabilization with any $F$ implies $\text{supp } W$ satisfies the GCC, was shown in \cite{Ralston1969}. These results were extended to the case $M \neq \emptyset$ by \cite{BardosLebeauRauch1992} and \cite{BurqGerard1997}. This in turn guarantees that when uniform stabilization occurs one can always let $F=Ce^{-\kappa t}$ for some $\kappa,C>0$. For a finer discussion of when uniform stabilization occurs for $L^{\infty}$ damping see \cite{BurqGerard2018}.

A natural next question to ask is what occurs when the GCC does not hold for $\text{supp } W$. Because of the necessity of the GCC for uniform stabilization, as soon as it does not hold we must adjust the kind of decay we hope for. The next best thing is the stability defined in Definition 1, which comes from \cite{Lebeau1996} and requires initial data with an additional spatial derivative. 

In \cite{Burq1998}, the author showed that the energy of a solution to \eqref{DWE} decays at least logarithmically ($f(t)=1/\log(2+t)$) as soon as the damping $W(x) \geq c >0$ on some open, nonempty set. 
In \cite{Lebeau1996} the author gave explicit examples of domains on which $f(t)=1/\log(2+t)$ is the exact decay rate, in particular when $M=S^2$ and $\{W>0\}$ does not intersect a neighborhood of the equator. For related work see also \cite{LebeauRobbiano1997}.

In the case of the square when the damped region contains a vertical strip, \cite{LiuRao2005} established a decay rate of $f(t)=(\log(t)/t)^{1/2}$. This was expanded to the case of partially rectangular domains when $\{W>0\}$ contains a neighborhood of the nonrectangular part in \cite{BurqHitrik2007}. Additionally in \cite{BurqHitrik2007} a relation between vanishing rate of the damping and decay rate for the damped wave equation was established.  

These results were refined by \cite{AnantharamanLeautaud2014}.
The authors established a decay rate of $f(t)=1/t^{1/2}$ for the damped wave equation in a more general setting, so long as the associated Schr\"odinger equation is controllable. This includes the case of not identically vanishing damping on the 2 dimensional square (or torus) as a consequence of \cite{Jaffard1990} (resp. \cite{Macia2010}, \cite{BurqZworski2012}).

Continuing in the case of the 2 dimensional square \cite{AnantharamanLeautaud2014} also show that the system can not be stable at rate $f(t)=1/t^{1+\e}$ for any $\e>0$, when $\{W>0\}$ does not satisfy GCC,  (this condition is  referred to as the GCC being strongly violated).  They further show the existence of a smooth damping coefficient which strongly violates the GCC for which the energy decays at rate $f(t)=1/t^{1-\e}$ for any $\e>0$. 

In an appendix to \cite{AnantharamanLeautaud2014}, Nonnenmacher shows that when the damping is the indicator function on a strip on the square or torus the system cannot be stable at rate $f(t)=1/t^{2/3+\e}$, for any $\e>0$. The complementary result was shown in \cite{Stahn2017}  to establish $f(t)=1/t^{2/3}$ as the exact rate of decay when damping is a strip on the square or torus. The difference in behavior between the smooth and discontinuous damping led the authors of \cite{AnantharamanLeautaud2014} to pose the problem of establishing an explicit relation between the vanishing rate of the damping and the decay rate.  

An explicit relation was established by \cite{LeataudLerner2017} in a slightly different setting. The authors study the case of a general manifold in which the damping is supported everywhere but a flat subtorus. In the 2 dimensional case this is an example of the GCC not holding but also not being strongly violated. The damping is required to be invariant along this subtorus and must satisfy $W(x) \leq C|x|^{\beta}$ near where it vanishes. When this is the case the authors show that the system cannot be stable at rate  $f(t)=1/t^{\frac{\beta+2}{\beta}+\e}$, for any $\e>0$. 
They also show that if the vanishing behavior of the damping is further limited to $C_1^{-1}|x|^{\beta} \leq W(x) \leq C_1 |x|^{\beta}$ the system is stable at exactly the rate $f(t)=1/t^{\frac{\beta+2}{\beta}}$ (See also \cite{BurqZuily2015}). 

Note that in \cite{LeataudLerner2017} decreasing $\beta$ corresponds to faster vanishing (i.e. less regular damping) which produces faster decay, which is counter to the behavior exhibited in \cite{AnantharamanLeautaud2014}, \cite{BurqHitrik2007} and our own result, namely that faster vanishing (i.e. less regular damping) produces slower decay. However the dynamics in 2 dimensions in the two situations are different, with only one undamped orbit in the former as opposed to a whole family in the latter.

Our paper provides a partial answer to the problem posed by \cite{AnantharamanLeautaud2014}. We establish an explicit relation between the rate of vanishing of the damping and the stability rate of the system, in a case where the GCC is strongly violated on the square or $\T^2$. Our work also partially extends that of Nonnenmacher in the appendix to \cite{AnantharamanLeautaud2014}, which agrees with our first theorem when $\beta=0$, although that  result follows from the existence of modes of the stationary equation where we produce quasimodes. 
Our two results provide further evidence for the fact, discussed in \cite{AnantharamanLeautaud2014}, \cite{LeataudLerner2017}, \cite{BurqHitrik2007}, that once the support of the damping is fixed the rate of vanishing of the damping is the most significant feature when determining the decay rate. 

We note that our second result improves that of Theorem 1.2 of \cite{BurqHitrik2007}, which gives a decay rate of $f(t)=1/t^{\beta/(\beta+4)}$ (see also \cite{AnantharamanLeautaud2014} Theorem 2.6). We also note that there is a gap between our two results. Closing this gap would be an interesting area for further work. 

In the next section we outline the proof of Theorem \ref{mainresult}. Sections \ref{SectionCAPgoodenough}, \ref{SectionCAPeigen} and \ref{proofofdampedltwo} contain the details of the proof. Section \ref{prooflowerbound} contains the proof of Theorem \ref{lowerbound}. 

\textbf{Acknowledgements} The author would like to thank Jared Wunsch for his advice and guidance. The author would also like to thank Matthieu L\'eautaud for comments on an early manuscript. The author would also like to thank the referees for their prompt, detailed and constructive comments.  
This research was supported in part by the National Science Foundation grant ''RTG: Analysis on manifolds" at Northwestern University.

\section{Outline of Proof of Theorem \ref{mainresult}}

To prove Theorem \ref{mainresult} we rely on the following result from \cite{AnantharamanLeautaud2014} (Proposition 2.4) and \cite{BorichevTomilov2010} which relates energy decay of the damped wave equation to resolvent estimates of the stationary damped wave equation. With this result it is sufficient to produce sufficiently good quasimodes, to do so we reduce the problem to one dimension and then the interval $[0,b]$. We will then show that we can use solutions to a related problem, the complex absorbing potential, on the half line to produce the desired quasimodes. We finally show that such solutions of the complex absorbing potential problem exist by producing solutions on $(0,a)$ and $(a,\infty)$ separately, the latter following from a rescaling argument, we are able to glue these solutions together via a compatibility condition which we satisfy via the implicit function theorem. 

\begin{proposition}\label{BTresult}
Fix $\alpha$, if there exist sequences $\{q_j\} \in \C, \{u_j\} \in H^2(M) \cap H_0^1(M),$ (or $H^2(M)$ if $\p M= \emptyset$) and some $j_0 \in \Nb$, such that for all $j>|j_0|$ 
\begin{equation}\label{quasimode}
\nm{-\Del u_j + i q_j W(x) u_j- q_j^2 u_j}_{L^2(M)}^2 \leq \frac{C}{ |Re (q_j)|^{2/\alpha}}\left( \nm{u_j}_{H^1(M)}^2 + |q_j|^2 \nm{u_j}_{L^2(M)}^2 \right),
\end{equation}
and 
\begin{equation}\label{qposition}
|q_j| \ra \infty, \quad |\operatorname{\operatorname{Im}}(q_j)| \leq \frac{C}{|Re(q_j)|^{1/\alpha}},
\end{equation}
then for all $\e>0$ the system \eqref{DWE} is not stable at rate 
$$
1/t^{\alpha + \e}.
$$
\end{proposition}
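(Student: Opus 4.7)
The plan is to invoke the Borichev--Tomilov characterization of polynomial semigroup decay by resolvent bounds on the imaginary axis and to convert the hypothesized quasimodes into a lower bound on the resolvent that contradicts the upper bound implied by stability at rate $1/t^{\alpha+\e}$. First I would recast \eqref{DWE} as a first-order system $\p_t U = \A U$ with $U = (u, \p_t u)$ and generator
$$
\A = \begin{pmatrix} 0 & I \\ \Del & -W \end{pmatrix},
$$
acting on the energy space $\Hc$ built from pairs $(u,v)$ with $u \in H^1(M)$ and $v \in L^2(M)$ (with Dirichlet condition on the square or zero mean on $\T^2$). Standard theory makes $\A$ the generator of a bounded contractive $C_0$ semigroup $T(t) = e^{t\A}$, and stability of \eqref{DWE} at rate $1/t^{\alpha+\e}$ in the sense of Definition 1 is equivalent to $\nm{T(t)\A^{-1}}_{\Hc \ra \Hc} \leq C t^{-(\alpha+\e)}$.

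By the Borichev--Tomilov theorem \cite{BorichevTomilov2010}, this decay implies
$$
\nm{(is - \A)^{-1}}_{\Hc \ra \Hc} = O(|s|^{1/(\alpha+\e)}), \qquad |s| \ra \infty.
$$
A standard Neumann-series perturbation argument, exploiting that $\A$ differs from a skew-adjoint operator by a bounded term, promotes this to a polynomial strip around the imaginary axis: there exists $c>0$ such that
$$
\nm{(z - \A)^{-1}}_{\Hc \ra \Hc} \leq C' |\operatorname{Im} z|^{1/(\alpha+\e)} \text{ whenever } |\operatorname{Re} z| \leq c |\operatorname{Im} z|^{-1/(\alpha+\e)}.
$$
This is the form in which Borichev--Tomilov enters Proposition 2.4 of \cite{AnantharamanLeautaud2014}, and it dictates the precise shape of the hypothesis \eqref{qposition}.

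To contradict the upper bound above, set $U_j := (u_j, iq_j u_j)$ and compute
$$
(\A - iq_j)U_j = (0, -f_j), \qquad f_j := -\Del u_j + iq_j W u_j - q_j^2 u_j.
$$
Then $\nm{U_j}_{\Hc} \leq \nm{(\A - iq_j)^{-1}}_{\Hc \ra \Hc} \nm{f_j}_{L^2}$, and since $\nm{U_j}_{\Hc}^2$ is equivalent to $\nm{u_j}_{H^1}^2 + |q_j|^2 \nm{u_j}_{L^2}^2$, the defect bound \eqref{quasimode} yields
$$
\nm{(iq_j - \A)^{-1}}_{\Hc \ra \Hc} \geq c |\operatorname{Re} q_j|^{1/\alpha}.
$$
By \eqref{qposition} the point $iq_j$ lies in the strip of the previous paragraph for all large $j$, so comparing the lower bound $|\operatorname{Re} q_j|^{1/\alpha}$ with the upper bound $O(|q_j|^{1/(\alpha+\e)})$ as $|q_j| \ra \infty$ yields the desired contradiction, since $1/\alpha > 1/(\alpha+\e)$.

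The only genuinely nontrivial step is the passage from the imaginary-axis resolvent estimate to the strip version in the second paragraph; everything else is bookkeeping that translates the quasimode defect \eqref{quasimode} into a resolvent-norm inequality. The constraint \eqref{qposition} is calibrated precisely so that $iq_j$ sits inside this admissible strip, and the exponents in \eqref{quasimode} and \eqref{qposition} are the ones under which the Borichev--Tomilov upper bound and the quasimode lower bound differ by exactly the $\e$-loss needed to produce the contradiction.
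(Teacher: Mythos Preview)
The paper does not actually prove this proposition; it is stated as a known result imported from Proposition~2.4 of \cite{AnantharamanLeautaud2014} and \cite{BorichevTomilov2010}, and no argument is given in the text. Your sketch is precisely the standard proof one finds in those references: rewrite \eqref{DWE} as $\p_t U=\A U$ on the energy space, use Borichev--Tomilov to turn stability at rate $t^{-(\alpha+\e)}$ into an $O(|s|^{1/(\alpha+\e)})$ resolvent bound on $i\Rb$, perturb to a polynomial strip, and then feed $U_j=(u_j,iq_ju_j)$ into $(iq_j-\A)^{-1}$ to extract a lower bound of order $|\operatorname{Re} q_j|^{1/\alpha}$ from \eqref{quasimode}. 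The bookkeeping you do---in particular noting that $\operatorname{Re}(iq_j)=-\operatorname{Im} q_j$ and $\operatorname{Im}(iq_j)=\operatorname{Re} q_j$, so that \eqref{qposition} with exponent $1/\alpha$ eventually places $iq_j$ inside the $1/(\alpha+\e)$ strip---is exactly the point of the hypothesis. So your argument is correct and is the one the cited sources give; there is nothing to compare against in the present paper.
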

\textbf{Remark.} Although Proposition \ref{BTresult} has $\hp{u_j}{1}^2$  on the right hand side of \eqref{quasimode} the quasimodes $u_j$ we will apply it to satisfy a stronger inequality, namely 
\begin{equation}\label{realquasimode}
\nm{-\Del u_j + i q_j W(x) u_j- q_j^2 u_j}_{L^2(M)}^2 \leq \frac{C}{|Re(q_j)|^{2}}\nm{u_j}_{L^2(M)}^2.
\end{equation}
The strength of the conclusion we obtain from applying Proposition \ref{BTresult} to these quasimodes is instead limited by the $q_j$ for which we have such an estimate due to \eqref{qposition}.

Note that producing sequences $q_j$ and $u_j$ which satisfy the hypotheses of this proposition with $\alpha =\frac{\beta+2}{\beta+3}$ proves Theorem \ref{mainresult}.

We will make two simplifications before proceeding.  First we will reduce the problem to obtaining quasimodes of an ordinary differential equation on $[-b,b]$. We will then further restrict our attention to the same equation on $[0,b]$. 
After making these simplifications we will introduce three key parameters and the complex absorbing potential problem on $(0,\infty)$, solutions of which we will use to produce our desired quasimodes. 

For the first simplification note that for any sequence of integers $m_j$ if $\ti{u_j}$ is a sequence of functions on $[-b,b]$ which satisfy  
\begin{equation}\label{1dwe}
\begin{cases}
\nm{-\p_x^2 \ti{u_j}+ i q_j W \ti{u_j}+ \left(\frac{4 \pi^2 m_j^2}{b^2} - q_j^2\right) \ti{u_j}}_{L^2(-b,b)}^2 \leq \frac{C}{|Re(q_j)|^2} \nm{\ti{u_j}}_{L^2(-b,b)}^2 \quad \text{ as } j \ra \infty \\
\ti{u_j}(x)=0 \quad |x|=b,
\end{cases}
\end{equation}
then $u_j(x,y) = \ti{u_j}(x) \sin\left(\frac{2\pi m_j y}{b}\right)$ satisfy \eqref{realquasimode}. Therefore it is enough for us to find functions which satisfy \eqref{1dwe} with $q_j$ which satisfy \eqref{qposition} with $\alpha = \frac{\beta+2}{\beta+3}$.

The second simplification we make is to limit our attention to $[0,b]$ from $[-b,b]$. Since our damping is even, if we find integers $m_j$ and functions $\ti{u_j}$ on $[0,b]$ which satisfy 
\begin{equation}
\label{1dweb}
\begin{cases}
\nm{-\p_x^2 \ti{u_j}+ i q_j W \ti{u_j}+ \left(\frac{4 \pi^2 m_j^2}{b^2} - q_j^2\right) \ti{u_j}}_{L^2(0,b)}^2 \leq \frac{C}{|Re(q_j)|^2} \nm{\ti{u_j}}_{L^2(0,b)}^2 \text{ as } j \ra \infty \\
\ti{u_j}(x)=0 \quad x=b \\
\ti{u_j}(0)=0 \text{ or } \ti{u_j'}(0)=0
\end{cases}
\end{equation}
we can extend the $\ti{u_j}$ to $-b \leq x<0$ by setting $\ti{u}_j(-x)=-\ti{u}_j(x)$ (or $\ti{u}_j(-x)=\ti{u}_j(x)$ resp.) and the resulting functions satisfy \eqref{1dwe}. Therefore it is enough for us to find functions which satisfy \eqref{1dweb} with $q_j$ which satisfy \eqref{qposition} with $\alpha = \frac{\beta+2}{\beta+3}$.

Before we introduce the complex absorbing potential we introduce three new parameters. Let $h \in [0,1)$  be a small parameter which will be sent to 0 and have $\frac{b}{2 \pi h^2} \in \Nb$. Let $l$ be a bounded parameter, when working in the case $u(0)=0$ we impose $l \in \Zb$ and when $u'(0)=0$ we impose $l+\frac{1}{2} \in \Zb$ but otherwise leave $l$ free in relation to $h$. We define 
\begin{equation}\label{lambda}
\lambda_h = \frac{\pi l h}{a} + C_h h^{(\beta+4)/(\beta+2)} \quad C_h = O_h(1) \in \C.
\end{equation}
We will eventually specify $C_h$ more completely in Section \ref{SectionCAPeigen}. When appropriate we will refer to sequences of these parameters as $h_j, \lambda_{h_j}, l_j$ and $C_{h_j}$, where we emphasize that $\lambda_{h_j}$  and $C_{h_j}$ depend on $h_j$. 

Now we introduce the complex absorbing potential problem on $(0,\infty)$ 
\begin{equation}\label{CAP}
\begin{cases}
0 =-h^2 \p_x^2 v + i(x-a)_+^{\beta} v - \lambda_h^2 v \\
v(0)=0 \text{ or } v'(0)=0.
\end{cases}
\end{equation}

In order to relate this to \eqref{1dweb} we make an ansatz for relations between the parameters. If $v_j$ are a sequence of solutions of \eqref{CAP} for some $h_j, l_j, \lambda_{h_j}, C_{h_j}$, we define $q_j$ and $m_j$ as follows 
\begin{align} \label{ansatzq}
m_j&= \frac{b}{2 \pi h_j^2} \in \Nb\\
q_j&= \frac{1}{h_j^2}+ \frac{\lambda_{h_j}^2}{2} = \frac{1}{h_j^2}+ \frac{\pi^2 l_j^2 h_j^2}{a^2} + \frac{2 C_{h_j} \pi l_j }{a} h_j^{(2 \beta +6)/(\beta+2)} + C_{h_j}^2 h_j^{(2 \beta+8)/(\beta+2)}. \nonumber
\end{align}

Note that in this regime 
\begin{align*}
Re(q_j) &= \frac{1}{h_j^2} + O(h_j^2) \quad \operatorname{Im}(q_j) = \frac{2\operatorname{Im}(C_{h_j}) \pi l_j}{a} h_j^{(2 \beta+6)/(\beta+2)} + O(h_j^{(2 \beta+8)/(\beta+2)}),
\end{align*}
so
$$
q_j \ra \infty \text{ and } |\operatorname{Im}(q_j)| \leq \frac{C}{|Re(q_j)|^{(\beta+3)/(\beta+2)}} \quad \text{ as } j \ra \infty.
$$
As we will see shortly, solutions of \eqref{CAP} in this regime satisfy the inequality in \eqref{1dweb} but not necessarily the boundary condition at $x=b$. In order to ensure they do we multiply these solutions by a cutoff function which is $0$ in a neighborhood of $b$. We will see the resulting functions still satisfy the inequality in \eqref{1dweb} as the solutions of \eqref{CAP} in this regime have rapid decay on the support of the potential (see Lemma \ref{dampedltwo}), which is exactly where errors introduced by the cutoff function appear.  

\textbf{Remark} We note also that it is because we are working with solutions to \eqref{CAP} on the half-line that we will only produce quasimodes rather than real modes. It is necessary for us to work on the half-line in order to perform a rescaling that allows us to set $h=0$, if we were working on a finite interval the rescaling would make the interval depend on $h$ which our approach is not well adapted to address. 

Fix $\d>0$ such that $a+\sigma<b-2\d$; we define $\phi \in \Ci(0,\infty)$ to satisfy
\begin{equation}\label{cutoff}
\phi(x) = \begin{cases} 
1 &x< b-2\d \\
0& x>b-\d
\end{cases}
\end{equation}
\begin{proposition}
\label{CAPgoodenough}
Fix $M>0$, let $\{v_j\} \in H^2(0,\infty)$ be a sequence of solutions of \eqref{CAP} with eigenvalues
$$
\lambda_{h_j}= \frac{ \pi l_j h_j}{a} + C_{h_j} h_j^{(\beta+4)/(\beta+2)}, \quad C_{h_j}=O(1) \in \C,
$$
where $ |l_j| \leq M$ and $h_j \ra 0$ as $j \ra \infty$ and $\frac{b}{2\pi h_j^2} \in \Nb$. Set 
$$
u_j(x) = \phi(x) v_j(x).
$$
Then for $j$ large enough so that $h_j < \sigma^{\beta/2}$ the functions $u_j$ with $q_j, m_j$ as defined in \eqref{ansatzq} satisfy \eqref{1dweb} and \eqref{qposition}.
\end{proposition}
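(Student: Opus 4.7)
The plan is a direct computation: substitute $u_j = \phi v_j$ into the operator appearing in \eqref{1dweb}, use the CAP equation \eqref{CAP} and the ansatz \eqref{ansatzq} to cancel the leading-order terms, and bound the surviving errors using the rapid decay of $v_j$ supplied by Lemma \ref{dampedltwo}. The condition \eqref{qposition} with $\alpha = (\beta+2)/(\beta+3)$ already follows from the computation of $\text{Re}(q_j)$ and $\text{Im}(q_j)$ given immediately after \eqref{ansatzq}, so what remains is the $L^2$ inequality in \eqref{1dweb} together with the boundary conditions. The boundary conditions come for free: $\phi(b) = 0$ forces $u_j(b) = 0$, while $\phi \equiv 1$ on a neighborhood of the origin (using $a + \sigma < b - 2\delta$) gives $u_j(0) = v_j(0)$ and $u_j'(0) = v_j'(0)$, so whichever alternative holds for $v_j$ at $0$ is inherited by $u_j$.

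For the main inequality, expand $-\p_x^2 u_j = -\phi'' v_j - 2\phi' v_j' - \phi v_j''$, use \eqref{CAP} to replace $v_j''$, and invoke the identities $4\pi^2 m_j^2/b^2 = 1/h_j^4$ and $q_j = 1/h_j^2 + \lambda_{h_j}^2/2$ from \eqref{ansatzq}. The terms of order $1/h_j^4$ and $1/h_j^2$ cancel (the latter on the set $\{W = (x-a)_+^\beta\}$), leaving
\begin{equation*}
-\p_x^2 u_j + i q_j W u_j + \left(\tfrac{4\pi^2 m_j^2}{b^2} - q_j^2\right) u_j = -\phi'' v_j - 2\phi' v_j' + \tfrac{i\phi}{h_j^2}\bigl[W - (x-a)_+^\beta\bigr] v_j + \tfrac{i\lambda_{h_j}^2}{2} W \phi v_j - \tfrac{\lambda_{h_j}^4}{4} \phi v_j.
\end{equation*}
Since $|\lambda_{h_j}| = O(h_j)$ and $W \in L^\infty$, the last two terms are bounded in $L^2(0,b)$ by $C h_j^2 \|u_j\|_{L^2}$ and $C h_j^4 \|u_j\|_{L^2}$, respectively, which is within the $C h_j^2 \|u_j\|_{L^2}$ budget dictated by \eqref{1dweb}.

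The main obstacle is the three remaining error terms involving $\phi''$, $\phi'$, and $W - (x-a)_+^\beta$. The first two are supported in $[b-2\delta, b-\delta]$ where $\phi$ transitions, and the third in $[a+\sigma, b-\delta]$ where the actual damping diverges from the model $(x-a)^\beta$. All three live in $\{x > a + \sigma\}$, and the assumption $h_j < \sigma^{\beta/2}$, i.e.\ $h_j^2 < \sigma^\beta$, ensures that on this set the CAP potential $(x-a)^\beta$ dominates $\lambda_{h_j}^2$, so the equation is classically forbidden there. This is precisely where Lemma \ref{dampedltwo} applies: it should furnish estimates of the form $\|v_j\|_{H^1(b-2\delta, b-\delta)} \leq C h_j^2 \|u_j\|_{L^2}$ to handle the $\phi', \phi''$ commutators, and the stronger $\|v_j\|_{L^2(a+\sigma, b-\delta)} \leq C h_j^4 \|u_j\|_{L^2}$ to absorb the $1/h_j^2$ factor in the discrepancy term. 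Assuming Lemma \ref{dampedltwo} delivers decay at arbitrary polynomial order in $h_j$ on the support of the potential (as the text suggests), one then sums the five contributions to obtain \eqref{1dweb}.
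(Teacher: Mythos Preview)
Your proposal is correct and matches the paper's own proof essentially line for line: both expand $-\p_x^2(\phi v_j)$, use \eqref{CAP} and \eqref{ansatzq} to cancel the $h_j^{-4}$ and $h_j^{-2}$ contributions, and then invoke Lemma \ref{dampedltwo} (arbitrary polynomial decay of $\phi v_j$ on $(a+\sigma,b)$) to kill the commutator terms $\phi'' v_j,\,\phi' v_j'$ and the discrepancy $(W-(x-a)_+^\beta)\phi v_j$, while the two remaining terms are trivially $O(h_j^2)$ since $\lambda_{h_j}=O(h_j)$. Your grouping differs only cosmetically (you keep $\tfrac{i\lambda_{h_j}^2}{2}W\phi v_j$ where the paper keeps $\tfrac{i\lambda_{h_j}^2}{2}(x-a)_+^\beta\phi v_j$ and folds the difference into the $q_j$-weighted discrepancy term), and you are slightly more explicit than the paper about why the boundary condition at $x=0$ is inherited.
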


It remains to be seen that we can indeed find solutions to the complex absorbing potential problem with eigenvalues of this form.

\begin{theorem}
\label{CAPeigen}
For all $l \in \Zb,$ (or $l +\frac{1}{2} \in \Zb$), there exists $h_0>0$ and $K>0$ such that for all $h \in (0,h_0)$, there exists a $C_h$ with $|C_h|<K$ and $v \in  H^2(0,\infty)\cap H_0^1(0,\infty), v \neq 0$ (resp. $H^2(0,\infty)$) satisfying \eqref{CAP} with $v(0)=0$ (resp. $v'(0)=0$) with $\lambda$ given by \eqref{lambda}.
\end{theorem}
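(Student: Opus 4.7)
The plan is to construct the solution by matched asymptotic expansions, solving \eqref{CAP} separately on $(0,a)$ and on $(a,\infty)$ and then matching the two pieces at $x=a$ via the implicit function theorem. On $(0,a)$ the damping vanishes, so the equation reduces to $-h^{2}v''=\lambda_{h}^{2}v$; the solution satisfying the boundary condition at $x=0$ is explicit, namely $v_{-}(x)=\sin(\lambda_{h}x/h)$ in the Dirichlet case and $v_{-}(x)=\cos(\lambda_{h}x/h)$ in the Neumann case. The form $\lambda_{h}=\pi l h/a+C_{h}h^{(\beta+4)/(\beta+2)}$ is chosen so that $\lambda_{h}a/h=\pi l+C_{h}a\,h^{2/(\beta+2)}$ is close to the zero set of $\sin$ (resp.\ $\cos$); consequently $v_{-}(a)$ is of size $h^{2/(\beta+2)}$ while $v_{-}'(a)$ stays $O(1)$ when $l\neq 0$ (the $l=0$ case will be handled by reading off the next-order balance in the same expansion).

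On $(a,\infty)$ I introduce the rescaling $t=(x-a)h^{-2/(\beta+2)}$, which transforms the equation into
\[
-\partial_{t}^{2}v+it^{\beta}v=\lambda_{h}^{2}h^{-2\beta/(\beta+2)}v,
\]
with right-hand side of size $O(h^{4/(\beta+2)})\to 0$. The limit equation $-w''+it^{\beta}w=0$ on $(0,\infty)$ has a unique (up to scalar) solution $w_{0}$ decaying at infinity: WKB analysis shows two independent solutions behaving like $\exp(\pm c\,t^{(\beta+2)/2})$ with a definite choice of branch for the quartic root of $-1$. A Levinson/Jost-type perturbation argument, equivalently a contraction mapping on an exponentially weighted $L^{\infty}$-space, then produces, for all sufficiently small $h$ and $|C_{h}|\le K$, a unique decaying solution $\widetilde{v}_{+}(t;h,C_{h})$ of the full rescaled equation, depending smoothly (even analytically) on $(h,C_{h})$ and reducing to $w_{0}$ at $h=0$. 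Undoing the rescaling gives an exterior solution $v_{+}(x;h,C_{h})$ on $(a,\infty)$ with $v_{+}(a)=w_{0}(0)+o(1)$ and $v_{+}'(a)=w_{0}'(0)\,h^{-2/(\beta+2)}+o(h^{-2/(\beta+2)})$.

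A global $H^{2}(0,\infty)$ solution of \eqref{CAP} exists exactly when $v_{-}$ and $v_{+}$ are proportional at $x=a$, i.e.\ when the Wronskian $F(h,C_{h}):=v_{-}(a)v_{+}'(a)-v_{-}'(a)v_{+}(a)$ vanishes. Substituting the asymptotics above and factoring out the common power of $h$ yields a limit equation of the schematic form $C_{h}\,a\,w_{0}'(0)-\pi l\,w_{0}(0)/a=0$, whose solution is $C_{0}=\pi l\,w_{0}(0)/(a^{2}w_{0}'(0))$. Since $\partial_{C_{h}}F(0,C_{0})$ is proportional to $w_{0}'(0)\neq 0$, the implicit function theorem produces a smooth branch $h\mapsto C_{h}$ on a neighborhood of $0$ with $C_{h}\to C_{0}$, hence $|C_{h}|\leq K$ for some $K$ depending on $l$, which is exactly the statement of Theorem~\ref{CAPeigen}.

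The main obstacle is the rigorous construction of the decaying solution $\widetilde{v}_{+}$ with smooth joint dependence on $(h,C_{h})$. Near $t=0$ the rescaled equation is a regular perturbation of the limit, but near $t=\infty$ the decaying mode must be isolated uniformly in the small parameters, which requires a careful WKB/Levinson analysis or an integral equation argument in a weighted space designed around the predicted decay rate $\exp(-c\,t^{(\beta+2)/2})$. Once this is in place, the matching is essentially linear in $C_{h}$ at leading order and the implicit function theorem closes the argument.
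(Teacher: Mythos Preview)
Your proposal is correct and follows essentially the same architecture as the paper: split at $x=a$, solve explicitly on $(0,a)$, rescale by $t=(x-a)h^{-2/(\beta+2)}$ on $(a,\infty)$, and match at $a$ via the implicit function theorem in the parameter $C_h$. The one genuine difference is how the decaying exterior solution is produced. You propose a WKB/Levinson construction in an exponentially weighted space tailored to the decay $\exp(-c\,t^{(\beta+2)/2})$; the paper instead fixes the Neumann normalization $F'(0)=1$ and obtains the unique $H^2$ solution of $-F''+(it^\beta-\eta)F=0$ by Lax--Milgram, using the coercivity estimate $\tilde\lambda_1\|u\|_{L^2}^2\le \|u'\|_{L^2}^2+\|t^{\beta/2}u\|_{L^2}^2$ coming from the bottom Neumann eigenvalue $\tilde\lambda_1>0$ of $-\partial_t^2+t^\beta$. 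The variational route buys you several things for free: existence and uniqueness in $H^2$ without any asymptotic analysis at infinity, holomorphic dependence of the Dirichlet trace $F(0,\eta)$ on the spectral parameter $\eta=\lambda_h^2 h^{-2\beta/(\beta+2)}$ (via holomorphy of the resolvent), and the nonvanishing $F(0,\eta)\neq 0$ needed for the implicit function theorem, all obtained by a short energy argument rather than a contraction in weighted spaces. Your approach would work too, but the step you correctly flag as the ``main obstacle'' is exactly what the paper sidesteps. One minor point: in the paper's formulation the case $l=0$ requires no separate treatment, since the implicit function theorem is applied to a function $G(\mu,h)$ whose $\mu$-derivative at $(0,0)$ equals $-2ai$ independently of $l$; your worry about $l=0$ is an artifact of your particular leading-order balance and disappears once the matching function is written as in the paper.
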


Using Theorem \ref{CAPeigen} we obtain a sequence $\{v_j\}$ of solutions of \eqref{CAP} which satisfy the hypotheses of Proposition \ref{CAPgoodenough} which in turn produces sequences which satisfy the hypotheses of Propositions \ref{BTresult} which in turn proves Theorem \ref{mainresult}.

\textbf{Remark.} It  is straightforward to  extend these results to the case $M=\T^2$. We parametrize $\T^2$ by $[-b,b] \times [-b,b]$ with parallel edges identified. Thus it is enough to show that the quasimodes we produced on the square satisfy periodic boundary conditions and are thus functions on the torus. Our quasimodes are of the form
$$
u(x,y) = v_j(x) \phi(x) \sin\left( \frac{2 \pi m_j y}{b}\right),
$$
so it is straightforward to see they satisfy periodic boundary conditions in $y$ and $x$ (as $u(x,y)\equiv 0$ for $|x-b|<\d$ and $|b+x|<\d$).

We will prove Proposition \ref{CAPgoodenough} in Section \ref{SectionCAPgoodenough}, we will then prove Theorem \ref{CAPeigen} in Section \ref{SectionCAPeigen}. We prove a necessary estimate in Section \ref{proofofdampedltwo}. We finally prove Theorem \ref{lowerbound} in Section \ref{prooflowerbound}.

\section{Proof of Proposition \ref{CAPgoodenough}}
\label{SectionCAPgoodenough}
We begin by stating an estimate necessary for the proof.

\begin{lemma} 
\label{dampedltwo}
Let $v \in H^1(0,\infty)$ be a solution of  \eqref{CAP} with eigenvalue $\lambda= O(h)$ and let $\phi$ be as in \eqref{cutoff}. Fix $s \in \Rb$ then for $h < \sigma^{\beta/2}$ for all $N$ there exists $C_{N,s}>0$ such that 
\begin{align}
\nm{\phi v}_{H^s_h(a+\sigma, b)}^2 &\leq C_{N,s} h^N \ltb{\phi v}^2.
\end{align}
\end{lemma}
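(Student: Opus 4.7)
The plan is to prove this by semiclassical elliptic regularity: on $(a+\sigma, b)$ the complex absorbing potential $i(x-a)^\beta$ provides enough ellipticity that $v$ must be $O(h^\infty)$ in that region compared to its $L^2$ mass elsewhere. First I would check that the semiclassical operator $P_h := -h^2 \p_x^2 + i(x-a)_+^\beta - \lambda_h^2$ is elliptic on $[a+\sigma, b]$. Its symbol is $p_h(x,\xi) = \xi^2 + i(x-a)^\beta - \lambda_h^2$; since $|\lambda_h^2| = O(h^2)$ and $h < \sigma^{\beta/2}$, the imaginary part satisfies $(x-a)^\beta - \operatorname{Im}(\lambda_h^2) \geq \sigma^\beta - O(h^2) \geq \sigma^\beta/2$ on $[a+\sigma, b]$. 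Combined with the $\xi^2$ contribution, this yields $|p_h(x,\xi)| \geq c(1+\xi^2)$ uniformly on $[a+\sigma,b] \times \Rb_\xi$, which is the standard semiclassical ellipticity.

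Next I would set $u := \phi v$ and exploit the inhomogeneous equation it satisfies. Since $Pv = 0$, we get $Pu = [P, \phi]v = -h^2(\phi'' v + 2\phi' v')$, which is crucially supported in $\supp \phi' \subset (b-2\delta, b-\delta)$, a closed subinterval of the elliptic region $(a+\sigma, b)$. Pick a nested family of cutoffs $\chi_0 \prec \chi_1 \prec \chi_2 \prec \cdots$, all supported in a small neighborhood of $[a+\sigma, b]$ contained in the elliptic set, with $\chi_0 \equiv 1$ on $[a+\sigma, b]$ and $\chi_{k+1} \equiv 1$ on $\supp \chi_k$. The standard semiclassical elliptic estimate (e.g.\ from Zworski's book) gives
\begin{equation*}
\nm{\chi_k u}_{H^s_h} \leq C \nm{\chi_{k+1} P u}_{H^{s-2}_h} + C_N h^N \nm{u}_{L^2}.
\end{equation*}

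To close the argument I would iterate: since $Pu$ carries an explicit prefactor $h^2$ and is supported inside the elliptic region, each application of the elliptic estimate converts a power of $h$ from the forcing into improved regularity/$h$-decay of $\chi_k u$. One then substitutes the bound for $\chi_{k+1} v$ (obtained from a parallel elliptic estimate applied to $v$ itself, which solves $Pv = 0$ microlocally in the elliptic set) into the $\chi_{k+1} P u$ term. After $\lceil N/2 \rceil$ rounds the right-hand side becomes $O(h^N) \nm{v}_{L^2}$, and since $\phi \equiv 1$ on $(0, b-2\delta)$ we have $\nm{v}_{L^2(0,a)} \leq \nm{\phi v}_{L^2(0,b)}$, so the $L^2$ side can be replaced by $\ltb{\phi v}$ up to bounding the contribution from $(a+\sigma, b-2\d)$ by itself via the first iterate.

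The main obstacle is the bookkeeping in the iteration: on $\supp \phi'$ one has $\phi \neq 1$, so $\phi v$ and $v$ are genuinely distinct, and one must be careful to ensure that each elliptic estimate only references norms of $\chi_{k+1} v$ (or $\chi_{k+1}\phi v$) on sets where the previous step already provides control. The remedy is to choose the $\chi_k$ so that $\chi_{k+1}$ is supported well inside $\{\phi \equiv 1\} \cup \{\phi \equiv 0\}$'s transition region intersected with the elliptic set, and to carry parallel estimates for $v$ and $\phi v$ simultaneously. Alternatively—and this is perhaps cleaner—one can replace the inductive argument by an Agmon-type exponential weight estimate with weight $\varphi(x) = c(x-a)^{(\beta+2)/2}$, which yields exponential rather than polynomial decay in $h$ and trivially implies the claimed $h^\infty$ bound.
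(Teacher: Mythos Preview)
Your elliptic-regularity setup is essentially the paper's: the operator is semiclassically elliptic on $(a+\sigma/4,b)$, and a black-box elliptic estimate (the paper cites Zworski, Theorem~7.1) immediately gives
\[
\nm{\phi v}_{H^s_h(a+\sigma,b)} \leq O(h^{\infty})\,\nm{v}_{L^2}.
\]
The issue is the second half of the lemma: the right-hand side must be $\ltb{\phi v}$, not $\nm{v}_{L^2}$. Here there is a real gap in your proposal. Your iteration and ``parallel estimates for $v$ and $\phi v$'' cannot close this by themselves, because every elliptic estimate leaves a residual $O(h^{\infty})\nm{v}_{L^2}$ over a set that must eventually include a neighborhood of $\supp\phi'$ and beyond, where $\phi$ is not identically~$1$ (indeed $\phi\equiv 0$ past $b-\d$). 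The interval you flag as problematic, $(a+\sigma,b-2\d)$, is actually harmless since $\phi\equiv 1$ there; the genuine obstruction lies in $(b-2\d,\infty)$, and no arrangement of nested cutoffs converts $\nm{v}_{L^2}$ on that set into $\ltb{\phi v}$.

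The paper fills this gap with a short energy argument that you are missing: pair $P_h v=0$ with $\bar v$, integrate by parts, and take the imaginary part to get
\[
\int_0^{\infty}(x-a)_+^{\beta}|v|^2\,dx=\operatorname{Im}(\lambda_h^2)\int_0^{\infty}|v|^2\,dx\leq Ch^2\int_0^{\infty}|v|^2\,dx.
\]
Since $(x-a)^{\beta}\geq\sigma^{\beta}$ on $(a+\sigma,\infty)$, this yields $\int_{a+\sigma}^{\infty}|v|^2\leq (h^2/\sigma^{\beta})\int_0^{\infty}|v|^2$, which for $h<\sigma^{\beta/2}$ can be absorbed to give $\nm{v}_{L^2(0,b)}\lesssim\ltb{\phi v}$. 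That single inequality is what lets the elliptic estimate conclude. Your Agmon-weight alternative would also supply this control, but you do not carry it out; the imaginary-part identity above is the most direct route and is exactly what the assumption $h<\sigma^{\beta/2}$ in the statement is calibrated for.
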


This will be proved in Section \ref{proofofdampedltwo} using the semiclassical ellipticity of $-h^2 \p_x^2 + i(x-a)_+^{\beta}-\lambda_h^2$ on $(a+\sigma/4, b)$.

\begin{proof}[Proof of Proposition \ref{CAPgoodenough}]

We have a sequence $v_j$ of solutions of 
$$
0=-h_j^2 \p_x^2 v_j + i (x-a)_+^{\beta} v_j  - \lambda_{h_j}^2 v_j, \quad x \in (0,\infty),
$$
with 
$$
\lambda_{h_j} = \frac{\pi l h_j}{a} + O\left(h_j^{(\beta+4)/(\beta+2)}\right), \quad h_j \ra 0 \text{ as } j \ra \infty.
$$
It is clear that $u_j= \phi v_j$  has $u_j(b)=\phi(b) v_j(b)=0$.  Recalling \eqref{ansatzq} and the subsequent discussion $q_j, m_j$ satisfy \eqref{qposition}. It remains to be seen that $u_j$ satisfies the inequality in \eqref{1dweb}. By \eqref{ansatzq} and \eqref{CAP} $u_j$ satisfies  
\begin{align*}
-\p_x^2 u _j &+ iq_j W(x) u_j + \left(\frac{4 \pi^2 m_j^2}{b^2} - q_j^2\right) u_j \nonumber \\
&= \phi\left( i \frac{\lambda_{h_j}^2}{2} (x-a)_+^{\beta} v_j - \frac{\lambda_{h_j}^4}{4} v_j\right)- \phi'' v_j - 2 \phi' v_j'+ iq_j\left(W(x)-(x-a)_+^{\beta}\right) \phi v_j. 
\end{align*}

Thus 
\begin{align*}
\ltb{-\p_x^2 u _j + iq_j W(x) u_j + \left(\frac{4 \pi^2 m_j^2}{a^2}- q_j^2\right) u_j}^2 \leq &\frac{\lambda_{h_j}^4}{4} \ltb{(x-a)_+^{\beta} u_j}^2 + \frac{\lambda_{h_j}^8}{16} \ltb{u_j}^2 + \ltb{\phi'' v_j}^2 \\
&+ 4 \ltb{\phi' v_j'}^2+ |q_j|^2 \ltb{ (W-(x-a)_+^{\beta}) \phi v_j}^2.
\end{align*}

Since $b-\d>a+\sigma$, by Lemma \ref{dampedltwo} for any $N>0$
\begin{align*}
&\ltb{\phi''v_j}^2 + \ltb{ \phi' v_j'}^2 \leq \frac{C}{h^2} \nm{\phi v_j}_{H_h^2(b-\d,b)} \leq C_N h^N \ltb{\phi v_j}^2. 
\end{align*}

Furthermore by Lemma \ref{dampedltwo} for any $N>0$ 
\begin{align*}
\ltb{(W-(x-a)_+^{\beta}) \phi v_j}^2 &\leq C \nm{ \phi v_j}_{L^2(a+\sigma,b)}^2 dx \leq C_N h^N \ltb{\phi v_j}^2.
\end{align*}

Therefore
\begin{align*}
\ltb{-\p_x^2 u_j + i q_j(x-a)_+^{\beta} u_j+ \left(\frac{4 \pi^2 m_j^2}{b^2}- q_j^2\right) u_j}^2 &\leq C\frac{\lambda_{h_j}^4}{4} \ltb{u_j}^2+ \frac{\lambda_{h_j}^8}{16} \ltb{u_j}^2+C_N h^N \ltb{u_j}^2 \\
&\leq  \frac{C}{|Re(q_j)|^2} \nm{u_j}_{L^2(0,b)}^2.
\end{align*}
Where we used that $|Re(q_j)|= 1/h_j^2 + O(h_j^2)$ and $\lambda_{h_j} = O(h_j)$.
\end{proof}
 
We now show that there are solutions of \eqref{CAP} with the desired eigenvalues.

\section{Proof of Theorem \ref{CAPeigen}}
\label{SectionCAPeigen}

From this point on we focus on solutions of \eqref{CAP} on $(0,\infty)$. We begin by considering the cases when $v(0)=0$ or $v'(0)=0$, but focus on the case $v(0)=0$ for the bulk of the section. We then explain how the proof changes for $v'(0)=0$. 

In order to produce solutions to \eqref{CAP} with the desired eigenvalues we will solve it on $(0,a)$ and $(a,\infty)$ separately. That is given solutions $v_l, v_r \in H^2$ of \eqref{CAP} on $(0,a)$ and $(a,\infty)$ respectively, with the same values of $\lambda_h$ and $h$,  if there exists $B \in \C$ such that 
\begin{equation}
\label{compat}
\begin{cases}
v_l(a) &= B v_r(a) \\
v_l'(a) &= B v_r'(a),
\end{cases}
\end{equation}
then 
$$
v= \begin{cases}
v_l(x) & x<a \\
B v_r(x) & a<x,
\end{cases}
$$
solves \eqref{CAP} on $(0,\infty)$ with the same $\lambda_h$ and $h$ and $v \in H^2(0,\infty) \cap H_0^1(0,\infty)$ (or $H^2(0,\infty)$ if $v_l'(0)=0$) . We will refer to equations \eqref{compat} as the compatibility condition. 

We will explicitly solve \eqref{CAP} on $(0,a)$. We will then use a rescaling of the equation on $(a, \infty)$ and the implicit function theorem to show that the compatibility condition can be satisfied  when $\lambda_h$ is of the form \eqref{lambda} and $h$ is small enough. 

On $(0,a)$ \eqref{CAP} is solved by 
$$
v_l(x) = e^{i \lambda_h(x-a)/h} + \Rl e^{-i\lambda_h (x-a)/h}, 
$$
where we choose $\Rl$ to ensure the boundary condition at $0$ is satisfied. That is 
$$
\Rl =\begin{cases} - e^{-2 i \lambda_h a/h} \quad &v(0)=0, \\
e^{-2i \lambda_h a/h} \quad &v'(0) =0.
\end{cases}
$$
We will work now specify to the case where $v(0)=0$ and work through it in detail and then summarize how the proof changes for $v'(0)=0$.

We now rescale the equation on $(a,\infty)$. If $F$ solves 
\begin{equation}
\label{Fequation}
\begin{cases}
0=-F''(x)+ \left( ix^{\beta} - \frac{\lambda_h^2}{h^{2\beta/(\beta+2)}} \right) F(x)  \qquad x \in (0,\infty)\\
F'(0)=1,
\end{cases}
\end{equation}
then $v_r(x) = F(h^{-2/(\beta+2)} (x-a))$ solves \eqref{CAP} on $(a,\infty)$ with $v_r'(a)=h^{-2/(\beta+2)}$.
This follows immediately from the definition of $F$ and \eqref{CAP}.

\textbf{Remark.} This rescaling is necessary; in order to show the compatibility condition can be satisfied for all $h$ in a neighborhood of 0 we will apply the implicit function theorem and so must be able to set $h=0$. This can not be done in a satisfactory way with solutions of \eqref{CAP}; however for $\lambda_h$ of the form \eqref{lambda}, \eqref{Fequation} is well defined at $h=0$ as 
$$
\frac{\lambda_h^2}{h^{2 \beta/(\beta+2)}} \bigg|_{h=0}= \frac{1}{h^{2\beta/(\beta+2)}} C h^2 + O(h^{(2\beta+6)/ (\beta+2)})\bigg|_{h=0} =0.
$$

Before we apply the implicit function theorem we first establish an inequality for $u \in H^1(0,\infty)$ using some facts about the Neumann spectrum of $-\p_x^2+x^{\beta}$ on $(0,\infty)$. We then use this inequality to establish the existence and uniqueness of $H^2(0,\infty)$ solutions to \eqref{Fequation} and to show the boundary value $F(0)$ is bounded away from $0$ uniformly in the spectral parameter. We then explain how we can use the implicit function theorem to satisfy the compatibility condition and make use of these properties of $F$ to do so.

Let $\sigma_N(-\p_x^2+x^{\beta})$  be the spectrum of $-\p_x^2+x^{\beta}$ on $(0,\infty)$ with Neumann boundary conditions, and let 
$$
\ti{\lambda_1} = \inf \sigma_N(-\p_x^2 + x^{\beta}).
$$ 
\begin{lemma}
$$\ti{\lambda_1} >0$$
and
\begin{equation}\label{varineq}
\ti{\lambda_1} \nm{u}_{L^2(0,\infty)}^2 \leq  \nm{u'}_{L^2(0,\infty)}^2 + \nm{x^{\beta/2} u}_{L^2(0,\infty)}^2,
\end{equation}
for all $u \in H^1(0,\infty)$.
\end{lemma}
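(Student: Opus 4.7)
The plan is to interpret the lemma as the variational characterization of the bottom of the spectrum of the non-negative self-adjoint operator $-\p_x^2 + x^{\beta}$ on $L^2(0,\infty)$ with Neumann boundary at $0$. This operator is defined through the closed, non-negative quadratic form
$$q(u) = \nm{u'}_{L^2(0,\infty)}^2 + \nm{x^{\beta/2} u}_{L^2(0,\infty)}^2$$
with form domain $\mathcal{D} = \{u \in H^1(0,\infty) : x^{\beta/2} u \in L^2(0,\infty)\}$. The min-max principle gives $\ti{\lambda_1} = \inf_{u \in \mathcal{D} \setminus \{0\}} q(u)/\nm{u}_{L^2(0,\infty)}^2$, which is exactly \eqref{varineq} for $u \in \mathcal{D}$. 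For $u \in H^1(0,\infty) \setminus \mathcal{D}$ the right-hand side is $+\infty$, so the inequality is trivial. Thus the content of the lemma reduces to showing $\ti{\lambda_1} > 0$.

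For $\beta = 0$ the potential is the constant $1$, so $q(u) \geq \nm{u}_{L^2(0,\infty)}^2$, and hence $\ti{\lambda_1} \geq 1$. For $\beta > 0$ the potential is confining, and I would prove that $\mathcal{D}$ endowed with the form norm embeds compactly into $L^2(0,\infty)$. The argument has two ingredients: on any compact interval $[0,R]$, boundedness in $\mathcal{D}$ implies boundedness in $H^1(0,R)$, so Rellich--Kondrachov yields local $L^2$-compactness; and the confining weight gives the tail estimate
$$\nm{u}_{L^2(R,\infty)}^2 = \int_R^{\infty} |u|^2 \, dx \leq R^{-\beta} \int_R^{\infty} x^{\beta} |u|^2 \, dx \leq R^{-\beta} q(u),$$
which vanishes uniformly on bounded subsets of $\mathcal{D}$ as $R \to \infty$. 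A diagonal extraction then produces an $L^2$-convergent subsequence of any bounded sequence in $\mathcal{D}$.

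With this compactness in hand, the direct method applied to a minimizing sequence for the Rayleigh quotient, together with weak lower semicontinuity of $q$, produces a unit-norm minimizer $\phi \in \mathcal{D}$ with $q(\phi) = \ti{\lambda_1}$. If $\ti{\lambda_1}$ were $0$, then $q(\phi) = 0$ would force $\phi' = 0$ almost everywhere, so $\phi$ would be constant; but the only constant in $L^2(0,\infty)$ is $0$, contradicting $\nm{\phi}_{L^2(0,\infty)} = 1$. Hence $\ti{\lambda_1} > 0$. The one moderately delicate point is the compact embedding for $\beta > 0$; everything else is a routine consequence of the min-max principle and elementary estimates.
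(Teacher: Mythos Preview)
Your proof is correct and follows essentially the same route as the paper: the inequality is the variational characterization of the bottom of the Neumann spectrum, and positivity follows because the spectrum is discrete (so the infimum is attained) while no nonzero $L^2$ function has vanishing form. The only difference is presentational---the paper cites \cite{HislopSigal2012} for discreteness of the spectrum and the variational principle, whereas you give a self-contained proof of the compact embedding $\mathcal{D}\hookrightarrow L^2(0,\infty)$ via Rellich--Kondrachov plus the tail bound $\nm{u}_{L^2(R,\infty)}^2\leq R^{-\beta}q(u)$ and then run the direct method explicitly.
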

\begin{proof}
The Neumann spectrum is discrete (this follows for instance from \cite{HislopSigal2012} Theorems 5.10 and 10.7) so we know that $\ti{\lambda_1}$ is the lowest eigenvalue of the operator. We also know that the spectrum doesn't contain 0, since 
$$
\nm{\p_x u}_{L^2(0,\infty)}^2 +\nm{x^{\beta/2} u}_{L^2(0,\infty)}^2 > 0,
$$ 
for nontrivial $u \in H^1$. Thus $\ti{\lambda_1}>0$. 

The inequality follows immediately from the variational principle for the spectrum of self adjoint operators (see \cite{HislopSigal2012} Corollary 12.2).
\end{proof}

\begin{lemma}\label{Fprop}
For any $|\eta| < \ti{\lambda_1}$, there exists a unique $H^2(0,\infty)$ solution of 
\begin{equation}
\label{Fequation2}
\begin{cases} 
0 = -F''(x) + ix^{\beta} F(x) - \eta F(x) \\
F'(0)=1.
\end{cases}
\end{equation}
Furthermore if we let $F(0,\eta)$ be the value of this function at $x=0$ there exists $C>0$ such that for all $|\eta| \leq \frac{\ti{\lambda_1}}{2}$ 
$$
1/C \leq |F(0,\eta)| \leq C.
$$
and $F(0,\eta)$ is holomorphic in $\eta$ on that same neighborhood.
\end{lemma}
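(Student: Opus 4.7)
The plan is to construct $F$ by splitting $F = \chi + H$, where $\chi$ is a fixed smooth compactly supported cutoff carrying the Neumann data ($\chi(0)=0$, $\chi'(0) = 1$), and $H \in H^2(0,\infty)$ solves a Neumann boundary value problem for the shifted operator $A_\eta := -\p_x^2 + ix^\beta - \eta$. All three conclusions (existence/uniqueness, bounds on $F(0,\eta)$, holomorphy) are then read off from the non-self-adjoint spectral theory of $A_0$, with \eqref{varineq} as the sole nontrivial input.

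First I would observe that the Neumann and Dirichlet realizations of $A_0$ have no eigenvalues near $0$. Indeed, if $u \in H^2(0,\infty)$ is nontrivial with $A_0 u = \eta u$ and either $u'(0)=0$ or $u(0)=0$, pairing with $\bar u$ and separating real and imaginary parts yields
\begin{equation*}
\nm{u'}_{L^2}^2 = \operatorname{Re}(\eta)\,\nm{u}_{L^2}^2, \qquad \int_0^\infty x^\beta |u|^2\,dx = \operatorname{Im}(\eta)\,\nm{u}_{L^2}^2.
\end{equation*}
Summing and invoking \eqref{varineq} gives $\operatorname{Re}(\eta)+\operatorname{Im}(\eta) \geq \ti{\lambda_1}$, so $|\eta| \geq \ti{\lambda_1}/\sqrt 2$ by Cauchy--Schwarz. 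In particular there are no such eigenvalues in $\{|\eta| < \ti{\lambda_1}/\sqrt 2\}$, and after possibly decreasing the radius by a constant the range stated in the lemma is covered.

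For existence I would apply Lax--Milgram to the sesquilinear form
\begin{equation*}
a_\eta(u,v) = \int_0^\infty u' \overline{v'}\, dx + i \int_0^\infty x^\beta u \bar v\, dx - \eta \int_0^\infty u\bar v\, dx
\end{equation*}
on $V := \{u \in H^1(0,\infty) : x^{\beta/2} u \in L^2(0,\infty)\}$, with right-hand side $f_\eta := -A_\eta \chi \in L^2$. Since $\operatorname{Re} a_0(u,u) = \nm{u'}_{L^2}^2$ and $\operatorname{Im} a_0(u,u) = \nm{x^{\beta/2} u}_{L^2}^2$, the bound $|a_0(u,u)| \geq 2^{-1/2}(\nm{u'}_{L^2}^2 + \nm{x^{\beta/2} u}_{L^2}^2)$ combined with \eqref{varineq} gives $V$-coercivity of $a_0$, preserved under the bounded perturbation $-\eta(\cdot,\cdot)_{L^2}$ for $|\eta|$ small. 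The Neumann condition $H'(0)=0$ is the natural boundary condition for $a_\eta$. Local $H^2$ regularity on $[0,R]$ follows from bootstrapping $-H'' = f_\eta + \eta H - ix^\beta H \in L^2_{\mathrm{loc}}$, while on $[R,\infty)$, where $f_\eta \equiv 0$, the $L^2$-solution $H$ must lie in the one-dimensional decaying branch of the ODE (the Jost branch $\exp(-\tfrac{2}{\beta+2}e^{i\pi/4} x^{(\beta+2)/2})(1+o(1))$), hence is automatically in $H^2$ near infinity. Uniqueness of $F$ reduces, via $F_1 - F_2$, to the absence of Neumann eigenfunctions established above.

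For the bounds and holomorphy of $F(0,\eta) = H(0)$, the Lax--Milgram estimate gives $\nm{H}_V \leq C$ uniformly for $|\eta| \leq \ti{\lambda_1}/2$, and together with the $H^2$ regularity just described and the trace $H^2(0,\infty) \hookrightarrow C^0([0,\infty))$ this yields the uniform upper bound $|F(0,\eta)| \leq C$. For the lower bound, if $F(0,\eta_0) = 0$ at some $|\eta_0| \leq \ti{\lambda_1}/2$, then $F(\cdot,\eta_0) \in H^2(0,\infty)$ would be a nonzero Dirichlet solution of $A_0 F = \eta_0 F$ (nontrivial because $F'(0)=1$), contradicting the first step. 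Holomorphy of $\eta \mapsto H \in V$ follows from the Neumann-series expansion of the Lax--Milgram inverse, so $F(0,\eta)$ is holomorphic, hence continuous and nowhere vanishing on the compact disk $\{|\eta| \leq \ti{\lambda_1}/2\}$, and bounded below. The main technical obstacle is verifying that the weak Lax--Milgram solution really sits in $H^2(0,\infty)$ with $x^\beta H \in L^2$ and a uniform norm bound; this is ultimately handled by exploiting that the CAP potential $ix^\beta$ forces exponential decay at infinity, which is precisely the phenomenon underlying Lemma \ref{dampedltwo}.
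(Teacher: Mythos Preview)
Your approach is essentially identical to the paper's: both split $F=\chi+H$ with a compactly supported $\chi$ carrying the Neumann data, solve for $H$ via Lax--Milgram on $H^1(0,\infty)\cap x^{-\beta/2}L^2(0,\infty)$ with coercivity coming from \eqref{varineq}, obtain holomorphy of $F(0,\eta)$ from resolvent analyticity plus the trace map, and rule out $F(0,\eta_0)=0$ by observing it would produce a Dirichlet eigenfunction with $|\eta_0|$ too small. Your coercivity bound $|a_0(u,u)|\geq 2^{-1/2}(\|u'\|^2+\|x^{\beta/2}u\|^2)$, giving the range $|\eta|<\ti{\lambda_1}/\sqrt 2$, is in fact the honest estimate---the paper's displayed inequality $|B[u,u]|\geq\int|u'|^2+x^\beta|u|^2-|\eta|\int|u|^2$ drops this factor---but since only $|\eta|\leq\ti{\lambda_1}/2$ is ever used this is harmless.
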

\begin{proof}
We first show the existence of a solution. Let $\psi \in \Cs(0,\infty)$ with $\psi'(0)=1, \psi(0)=0$. Define $Q(\eta, \psi)$ as 
$$
Q(\eta, \psi):= \psi'' - ix^{\beta} \psi + \eta \psi.
$$
Now let $J$ solve
\begin{equation}
\label{Jequation}
\begin{cases}
-J''+ix^{\beta} J - \eta J = Q \\
J'(0) =0,
\end{cases}
\end{equation}
and note that $F= \psi+J$ solves \eqref{Fequation2}. We will apply the Lax-Milgram theorem to show the existence of solutions to \eqref{Jequation}. 

Let 
$$
H = H^1(0,\infty) \cap x^{-\beta/2} L^2(0,\infty),
$$
and define the norm 
$$
\nm{u}_H^2 = \nm{u}_{H^1(0,\infty)}^2 + \nm{x^{\beta/2} u}_{L^2(0,\infty)}^2,
$$
noting that $H$ is a Hilbert space with this norm. 

We define the sesquilinear form $B: H \times H \ra \C$  
$$
B[u,v] = \int_0^{\infty} u' \bar{v}' + ix^{\beta} u \bar{v}  - \eta u \bar{v}.
$$
For any $u,v \in H$
\begin{align*}
|B[u,v]| &\leq \int |u'| |v'| + x^{\beta} |u| |v| + |\eta| |u| |v| \leq C \nm{u}_H \nm{v}_H.
\end{align*}
Furthermore for $u \in H \subset H^1$ using \eqref{varineq} 
\begin{align*}
|B[u,u]| &\geq \int |u'|^2 + x^{\beta} |u|^2 - |\eta| |u|^2 dx \geq \left(1 - \frac{|\eta|}{\ti{\lambda_1}} \right) \int |u'|^2 + x^{\beta} |u|^2 dx  \geq C \nm{u}_H^2. 
\end{align*}
Therefore by Lax-Milgram for any $Q \in H$ there exists a unique $J \in H$ such that 
$$
B[J, v] = \int Q \bar{v} dx,
$$
for all $v \in H$. 

Therefore there exists an $F \in H$ solving \eqref{Fequation2} given by $F= J+\psi$. 

Now to show that $F(0,\eta)$ is holomorphic in $\eta$ we restate the result of our application of Lax-Milgram. We have shown that when $|\eta| < \ti{\lambda_1}$, for all $Q \in H$ there exists a unique $J \in H$ such that
$$
\nm{(-\p_x^2 + ix^{\beta} - \eta)^{-1} Q}_H=\nm{J}_H  \leq C \nm{Q}_H.
$$
Therefore $(-\p_x^2 + ix^{\beta} -\eta)$ is bijective with a bounded inverse for $|\eta| < \ti{\lambda_1}$. Thus the resolvent $(-\p_x^2 + ix^{\beta} -\eta)^{-1}$  exists for $|\eta|< \ti{\lambda_1}$ and by \cite{HislopSigal2012} Theorem 1.2 it is holomorphic in $\eta$ there as well.

Now recall that the trace operator $T: H^1(0,\infty) \ra \Rb$ is linear and continuous on $H$. Thus $T J = J(0,\eta)$ is also holomorphic in $\eta$. Recall that $F(0,\eta)=J(0,\eta)+\psi(0) = J(0,\eta),$ so $F(0,\eta)$ is holomorphic in $\eta$. 

To see that $F$ is unique assume otherwise, so there exists $F_1 \in H^1(0,\infty)$ which also solves \eqref{Fequation2} with $F_1(0) \neq F(0)$. Set $F_2=F_1-F$, then $F_2'(0)=0, F_2 \in H^1(0,\infty)$ and 
$$
0=-\p_x^2 F_2 + ix^{\beta} F_2 - \eta F_2. 
$$
Multiply both sides by $\bar{F_2}$ and integrate then integrate by parts 
$$
0 = \int_0^{\infty} |\p_x F_2|^2 + i x^{\beta} |F_2|^2 - \eta |F_2|^2 dx.
$$
Then using \eqref{varineq} 
$$
0 \geq \int |\p_x F_2|^2 + x^{\beta} |F_2|^2 - |\eta| |F_2|^2 dx \geq \left( 1 - \frac{|\eta|}{\ti{\lambda_1}} \right) \int |\p_x F_2|^2 + x^{\beta} |F_2|^2 dx >0,
$$
since $F_2 \in H^1(0,\infty)$. The inequality is strict since $|\eta| < \ti{\lambda_1}$, which is a contradiction. 

To establish the stated regularity of $F$ note that the equation \eqref{Fequation2} is elliptic and the left hand side is 0 so in fact $F \in H^{\infty}(0,\infty)$ and thus $F \in H^2(0,\infty)$ in particular. 

Now we show that $F(0,\eta)$ is bounded away from $0$ and $\infty$.
The upper bound follows immediately by the holomorphy of $F(0,\eta)$ and the boundedness of $\eta$. 

To see $|F(0,\eta)| > 1/C$ it is enough to show that $F(0,\eta) \neq 0$ since $F$ is continuous and we are considering $\eta$ in a compact set. Assume otherwise, so $F(0,\eta_0)=0$ for some $\eta_0 \in \C, |\eta_0| \leq \ti{\lambda_1}/2$. Multiply both sides of \eqref{Fequation2} by $\bar{F}$ then integrate and integrate by parts
\begin{align*}
0 = \int_0^{\infty} |\p_x F|^2 + ix^{\beta} |F|^2 - \eta_0 |F|^2 dx.
\end{align*}
Then using \eqref{varineq} (noting that here $F \in H_0^1\subset H^1$)
\begin{align*}
0 \geq \int |\p_x F|^2 + x^{\beta} |F|^2 - |\eta_0| |F|^2 dx \geq \left(1 - \frac{|\eta_0|}{\ti{\lambda_1}} \right) \int |\p_x F|^2 + x^{\beta} |F|^2 dx >0,
\end{align*}
We note that the inequality is strict since $|\eta_0| <  \ti{\lambda_1}$, which is a contradiction.
\end{proof}

Now we introduce $\mu_h \in \C$. We use it to clarify the dependence of $\lambda_h$ on $h$ and will implicitly solve for it in terms of $h$ in order to show that the compatibility condition can be satisfied. If $F_0$ is the Dirichlet data of the unique $H^2$ solution of \eqref{Fequation} for $h=\lambda_h=0$, we set $C_1=A_1 + \mu_h$ so our definition of $\lambda_h$ in \eqref{lambda} becomes 
$$
\lambda_h = \frac{\pi l h}{a} + A_1 h^{(\beta+4)/(\beta+2)} + \mu_h h^{(\beta+4)/(\beta+2)}, \quad 
$$
where $l \in \Zb$ and 
$$
A_1 = \frac{\pi l F_0}{a^2}.
$$

Now take $v_r(x)=F(h^{-2/(\beta+2)}(x-a)),$ where $F$ is the unique $H^2$ solution of \eqref{Fequation} with the above $\lambda_h$. Recalling the explicit form of $v_l(x)$, the compatibility condition becomes 
\begin{align*}
\frac{i \lambda_h}{h} (1 - \Rm )  &= B h^{-2/(\beta+2)}\\
1 + \Rm &= B F(0, \mu_h, h),
\end{align*}
where $F(0,\mu_h,h)$ denotes the Dirichlet data of $F$ and $\Rm=\Rl$ and both are written this way to emphasize their dependence on $\mu_h$ and $h$. Divide the top equation by the bottom   
\begin{align*}
\left( \frac{\pi l i}{a} + A_1i  h^{2/(\beta+2)}+i \mu h^{2/(\beta+2)} \right) \left(1 + \exp(-2\pi i l - 2A_1i a h^{2/(\beta+2)} - 2 ia \mu_h h^{2/(\beta+2)}) \right) F(0, \mu_h, h)\\
= h^{-2/(\beta+2)}\left(1 - \exp(-2\pi i l - 2A_1i a h^{2/(\beta+2)} -2 ia \mu_h h^{2/(\beta+2)}) \right).
\end{align*}

Now Taylor expand the exponentials for small $h$  
\begin{align*}
\left( \frac{\pi l i}{a} + A_1i h^{2/(\beta+2)}+i \mu_h h^{2/(\beta+2)} \right) \left(2 - 2A_1 ia h^{2/(\beta+2)} - 2 ia \mu_h h^{2/(\beta+2)} + g(h) \right) F(0, \mu_h, h)\\
= h^{-2/(\beta+2)}\left(2A_1 ia h^{2/(\beta+2)} +2 ia \mu_h h^{2/(\beta+2)} -g(h) \right),
\end{align*}
where $g(h)$ is the remainder term from the Taylor expansion with $g(h)=O(h^{4/(\beta+2)})$. 

So in order to prove Theorem \ref{CAPeigen} it is enough to establish that for all $h$ near $0 \in [0,\infty)$ there exists $\mu_h \in \C$ such that the following function has a zero at $(\mu_h,h)$;
\begin{align}
\label{Gdef}
G(\mu, h) &= \left( \frac{\pi l i}{a} + A_1 i h^{2/(\beta+2)}+i \mu h^{2/(\beta+2)} \right) \left(2 - 2A_1 ia h^{2/(\beta+2)} - 2 ia \mu h^{2/(\beta+2)}) +g(h) \right) F(0,\mu, h) \nonumber \\
&-2A_1 ia -2 ia \mu + g(h) h^{-2/(\beta+2)}.
\end{align}
To do so we apply the implicit function theorem with weak regularity hypotheses to solve for $\mu_h$. We recall the implicit function theorem as stated in Theorem 11.1 of \cite{LoomisSternberg1990} (pp. 166) can be applied if there exists some $h_0$ such that 
\begin{enumerate}
	\item $G(0,0)=0$
	\item $G$ is continuous on $[0,1] \times [0,h_0)$
	\item $D_{\mu} G$ exists and is continuous on $[0,1] \times [0, h_0)$
	\item $D_{\mu}G(0,0)$ is invertible.
\end{enumerate}

To begin we see immediately that 
\begin{align*}
G(0,0)= \frac{2 \pi l i}{a} F_0 - 2 A_1 i a  = \frac{2 \pi l i }{a} F_0 - \frac{2 \pi l i}{a} F_0=0.
\end{align*}
In the following two lemmas we show that points 2 and 3, and 4 are satisfied.  
\begin{lemma}\label{implicitlemma}
There exists $h_0 >0$ such that  $G$ as defined in \eqref{Gdef} is continuous and $\frac{\p}{\p \mu}G$ exists and is continuous on $\{\mu \in \C; |\mu|<1 \} \times [0, h_0)$. 
\end{lemma}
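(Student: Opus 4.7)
The plan is to decompose $G(\mu,h)$ into elementary factors and verify continuity plus $\mu$-differentiability of each separately on $\{|\mu|<1\} \times [0,h_0)$ for a suitably small $h_0>0$. The polynomial factors in $\mu$ and $h^{2/(\beta+2)}$ appearing in \eqref{Gdef} are trivially continuous and holomorphic in $\mu$, so the nontrivial work is confined to two ingredients: (i) the Dirichlet data $F(0,\mu,h)$ coming from the rescaled equation \eqref{Fequation}, and (ii) the Taylor remainder, which I henceforth write $g(\mu,h)$ since the expansion was performed about a $\mu$-dependent exponent.

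For the $F$ factor, recall that $F$ solves \eqref{Fequation}, so $F(0,\mu,h)=F(0,\eta(\mu,h))$ where $\eta=\lambda_h^2/h^{2\beta/(\beta+2)}$. Substituting \eqref{lambda} and simplifying exponents yields
$$
\eta(\mu,h) = \frac{\pi^2 l^2}{a^2}\, h^{4/(\beta+2)} + \frac{2\pi l (A_1+\mu)}{a}\, h^{6/(\beta+2)} + (A_1+\mu)^2\, h^{8/(\beta+2)},
$$
which is a polynomial in $\mu$ with coefficients continuous in $h \in [0,h_0)$ and vanishing as $h\to 0$. Thus $\eta$ is continuous on the full domain, and $|\eta(\mu,h)|<\ti{\lambda_1}/2$ once $h_0$ is small enough. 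By Lemma \ref{Fprop}, $F(0,\cdot)$ is holomorphic on $|\eta|<\ti{\lambda_1}$, hence continuous and $C^1$ there. Composing gives continuity of $F(0,\mu,h)$ and, via the chain rule,
$$
\frac{\p}{\p \mu} F(0,\mu,h) = \frac{\p F}{\p \eta}(0,\eta(\mu,h)) \left[ \frac{2\pi l}{a}\, h^{6/(\beta+2)} + 2(A_1+\mu)\, h^{8/(\beta+2)} \right],
$$
which is manifestly continuous on $\{|\mu|<1\} \times [0,h_0)$.

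For the remainder, writing the exponential with integral remainder $e^z = 1 + z + z^2 \int_0^1(1-s) e^{sz}\, ds$ and plugging in $z = -2ia(A_1+\mu)h^{2/(\beta+2)}$ (the factor $\exp(-2\pi i l)=1$ is absorbed), one obtains $g(\mu,h) = z^2 \int_0^1 (1-s)e^{sz}\, ds$. Direct estimation yields $|g(\mu,h)| \leq C h^{4/(\beta+2)}$ and, by differentiating under the integral sign, $|\p_\mu g(\mu,h)| \leq C h^{4/(\beta+2)}$ uniformly for $|\mu|\leq 1$ and small $h$. Consequently the potentially dangerous term $g(\mu,h)\, h^{-2/(\beta+2)}$ is $O(h^{2/(\beta+2)})$ with $\mu$-derivative $O(h^{2/(\beta+2)})$, so both extend continuously to $h=0$ by the value $0$.

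The only subtle point --- and the main obstacle --- is the singular prefactor $h^{-2/(\beta+2)}$ multiplying $g$ in the last line of \eqref{Gdef}. Everything works precisely because the Taylor remainder is two orders smaller in $h^{2/(\beta+2)}$ than the terms retained, supplying exactly the power needed to cancel the blow-up; this is also why one must work with $\lambda_h$ of the specific form \eqref{lambda} rather than a more general expansion. Assembling all the pieces, $G$ and $\p_\mu G$ are continuous on $\{|\mu|<1\}\times[0,h_0)$, which is the content of the lemma.
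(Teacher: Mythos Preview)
Your proof is correct and follows essentially the same route as the paper: both reduce the question to the regularity of $F(0,\mu,h)$, compute $\eta(\mu,h)$ explicitly, and invoke the holomorphy from Lemma~\ref{Fprop} together with the chain rule. Your treatment is in fact more careful than the paper's, which writes the Taylor remainder as $g(h)$ and dismisses it as ``clearly continuous''; you correctly observe that the remainder depends on $\mu$ and verify explicitly that the potentially singular term $g(\mu,h)h^{-2/(\beta+2)}$ and its $\mu$-derivative extend continuously to $h=0$.
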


\begin{proof}
To see that $G$ is continuous it will be enough to show that $F(0,\mu,h)$ is continuous as the other terms in $G(\mu, h)$ are clearly continuous in $\mu$ and $h$. Similarly to see that $\frac{\p}{\p \mu} G$ exists and is continuous it is enough to see that $\frac{\p}{\p \mu} F(0,\mu, h)$ exists and is continuous in $\mu$ and $h$. 

We recall that $F(0, \mu_h, h)$ is the Dirichlet data for the $L^2$ solution of \eqref{Fequation2} with spectral parameter
\begin{align*}
\eta &= \frac{\lambda_h^2}{h^{2\beta/(\beta+2)}} = \frac{\pi^2 l^2 h^{4/(\beta+2)} + (2 A_1 \pi l +2\pi l \mu)h^{6/(\beta+2)}}{a} + (A_1^2 + \mu^2 + A_1 \mu)h^{8/(\beta+2)}.
\end{align*}
By Lemma \ref{Fprop} the Dirichlet data for the $L^2$ solution is holomorphic in $|\eta| <\ti{\lambda_1}$. This $\eta$ is a sum of functions which are jointly continuous in $\mu$ and $h$ and continuously differentiable in $\mu$, therefore $F(0,\mu, h)$ is as well. 

Furthermore since $|\mu|<1$ and there is a positive power of $h$ in each term of $\eta$ there exists some $h_0$ such that $|\eta|<\ti{\lambda_1}/2$ for $|\mu|<1$ and $h \in [0, h_0)$. 
\end{proof} 

\begin{lemma}
With $G$ defined as in \eqref{Gdef} 
$$
\frac{\p}{\p \mu} G(\mu, h) \bigg|_{h=0, \mu=0}  \neq 0,
$$
and thus is invertible.
\end{lemma}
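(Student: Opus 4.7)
The plan is to compute $\partial_{\mu}G(\mu,h)\big|_{(0,0)}$ by taking the limit $h\to 0$ termwise in \eqref{Gdef}, using the fact that the rescaled spectral parameter $\eta$ collapses to $0$ at $h=0$ (as already noted after \eqref{Fequation}). This has two consequences which do most of the work: first, $g(h)=O(h^{4/(\beta+2)})$, so the combination $g(h)h^{-2/(\beta+2)}$ tends to $0$ as $h\to 0$; and second, since
$$\eta(\mu,h) = \frac{\pi^2 l^2}{a^2}h^{4/(\beta+2)} + \frac{2\pi l(A_1+\mu)}{a}h^{6/(\beta+2)} + (A_1+\mu)^2 h^{8/(\beta+2)}$$
has $\eta(\mu,0)=0$ and $\partial_{\mu}\eta(\mu,0)=0$, the function $F(0,\mu,h)$ satisfies $F(0,\mu,0)\equiv F_0$ and $\partial_{\mu}F(0,\mu,0)\equiv 0$.

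Concretely, first I would observe that all the factors with explicit $h^{2/(\beta+2)}$ or $g(h)$ either vanish or reduce to a single nonzero term at $h=0$, so
$$G(\mu,0) = \frac{2\pi l i}{a}\,F(0,\mu,0) \;-\; 2A_1 i a \;-\; 2ia\mu.$$
Differentiating in $\mu$ and evaluating at $\mu=0$, the $F$-term contributes nothing by the chain-rule computation above (using the holomorphy of $F(0,\eta)$ in $\eta$ from Lemma \ref{Fprop}, composed with the smooth dependence of $\eta$ on $(\mu,h)$). This leaves
$$\frac{\partial G}{\partial \mu}(0,0) = -2ia \neq 0,$$
since $a>0$ by hypothesis, which gives invertibility since we are dealing with a map $\C\to\C$.

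The only mild subtlety is justifying that one may truly set $h=0$ in the derivative — i.e., that $\partial_{\mu}G$ extends continuously down to $h=0$. This is exactly what Lemma \ref{implicitlemma} provides, so at this stage the computation is simply a matter of reading off the $h=0$ values of each factor. The expected main (and only) obstacle is therefore bookkeeping: making sure that the $h^{-2/(\beta+2)}$ prefactor attached to $g(h)$ is harmless (which follows directly from the error-term estimate $g(h)=O(h^{4/(\beta+2)})$) and that the derivative in $\mu$ is not inadvertently differentiating something singular in $h$. Once these points are checked, the desired non-degeneracy $-2ia\neq 0$ is immediate and the hypotheses of the implicit function theorem are verified, concluding the argument and hence the proof of Theorem \ref{CAPeigen} in the case $v(0)=0$.
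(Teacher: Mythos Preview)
Your argument is correct and is essentially identical to the paper's: you reduce $G(\mu,0)$ to $\frac{2\pi l i}{a}F(0,\mu,0)-2A_1ia-2ia\mu$, observe that $\partial_\mu F(0,\mu,0)=0$ because the spectral parameter $\eta$ vanishes identically at $h=0$, and conclude $\partial_\mu G(0,0)=-2ia\neq 0$. The only difference is cosmetic---you phrase the vanishing of $\partial_\mu F$ via the chain rule through $\eta(\mu,h)$, whereas the paper simply notes that the equation \eqref{Fequation} at $h=0$ has no $\mu$-dependence.
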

\begin{proof}
Note
$$
G(\mu, 0) = \frac{2\pi l i}{a} F(0,\mu,0) -2A_1ai - 2ai \mu
$$
so
$$
\frac{\p}{\p\mu}G(\mu, 0) = \frac{2 \pi l i}{a}\left( \frac{\p}{\p \mu} F(0,\mu,0) \right)-2ai.
$$
When $h=0$ the equation $F$ solves is $0=-F''(x) + ix^{\beta} F(x).$
Therefore when $h=0$ there is no dependence on $\mu$ so $\frac{\p}{\p \mu} F(0,\mu,h=0) = 0$.
Thus 
$$
\frac{\p}{\p \mu} G(\mu=0,h=0) = -2 a i  \neq 0. 
$$
\end{proof}
Now that we have shown we can apply the implicit function theorem we conclude the proof of Theorem \ref{CAPeigen}. That is given some $l$ we let $h_0$ be as in the proof of Lemma \ref{implicitlemma} and let $K=\frac{\pi  |F_0|}{a^2}+1$. Then we choose an $h \in (0,h_0)$ and use the implicit function theorem to produce a $\mu_h \in [0,1]$ such that $G(\mu_h, h)=0$ and $|\mu_h|<1$. We set $C_h=\frac{\pi  F_0}{a^2}+\mu_h$ we are then able to solve \eqref{CAP} on $(0,a)$ and $(a,\infty)$ for $\lambda_h=\frac{\pi l h}{a}+A_1 h^{(\beta+4)/(\beta+2)} + \mu_h h^{(\beta+4)/(\beta+2)}$ such that the compatibility conditions are satisfied giving us a solution $v \in H^2(0,\infty) \cap H_0^1(0,\infty), v \neq 0$ with $|C_h|<K$ and $\lambda$ of the appropriate form. 

\subsection{Case $u'(0)=0$}
We now discuss how these proofs change when $u'(0)=0$. When this is the case  
$$
\Rl = e^{-2 i \lambda_h a/h}.
$$
Because of this we take $l+1/2 \in \Zb$ rather than $l \in \Zb$. This changes the specific steps taken when going from the compatibility condition to the definition of $G$ in \eqref{Gdef} but the eventual definition of $G$ is the same. The specific form of $l$ is otherwise not used so the remaining proofs in this section hold unchanged. 

\section{Proof of Lemma \ref{dampedltwo}}
\label{proofofdampedltwo}
\begin{proof}
To show this result we consider our operator $P_h=-h^2 \p_x^2 + i(x-a)_+^{\beta}-\lambda_h^2$ on $(a+\sigma/4,b)$. We note that the coefficients are smooth away from $a$ and so it makes sense to look at the pricipal symbol
$$
|\xi|^2 + i(x-a)_+^{\beta}.
$$
It is straightforward to see from this that $P_h$ is semiclassically elliptic on $(a+\sigma/4, b)$.

Recall in our definition of $\phi$ that we required $a+\sigma < b-2\d$ and in \eqref{cutoff} that $\phi$ satisfies 
$$
\phi(x) = \begin{cases} 
1 &x< b-2\d \\
0& b-\d<x.
\end{cases}
$$
We now define $\psi \in \Ci(0,b)$ satisfying 
$$
\psi = \begin{cases}
0 & x< a+\sigma/2 \\
1 & a+\sigma<x<b,
\end{cases}
$$
so that $\phi \psi \in \Psi^0_h(0, b)$ with $WF_h(\phi \psi) \subset ell_h(P_h).$

If $v$ is a solution of $P_h v=0$, by standard semiclassical elliptic estimates (see for instance \cite{Zworski2012} Theorem 7.1) there exists $\chi \in \Cs(a+\sigma/4,b)$ such that for all $s \in \Rb$
$$
\nm{\phi \psi v}_{H_h^s(a+\sigma/2, b)} \leq O( h^{\infty}) \nm{\chi v}_{L^2(a+\sigma/4,b)}
$$
In particular
$$
\nm{\phi v}_{H_h^s(a+\sigma, b)} \leq O(h^{\infty}) \nm{v}_{L^2(0,b)}.
$$
It remains to show that 
$$
\nm{v}_{L^2(0,b)} \lesssim \nm{\phi v}_{L^2(0,b)}.
$$
To proceed we multiply both sides of \eqref{CAP} by $\bar{v}$ then integrate and integrate by parts 
$$
0=h^2 \int_0^{\infty} |\p_x v|^2 dx + i \int_0^{\infty} (x-a)_+^{\beta} |v|^2 dx - \lambda_h^2 \int_0^{\infty} |v|^2 dx.
$$
Take the imaginary part of both sides and rearrange
$$
\int_0^{\infty} (x-a)_+^{\beta} |v|^2 dx = \operatorname{Im}(\lambda_h^2) \int_0^{\infty} |v|^2 dx \leq O(h^2) \int_0^{\infty} |v|^2 dx.
$$
Furthermore
$$
\int_{a+\sigma}^{\infty} \sigma^{\beta} |v|^2 dx \leq \int_{a+\sigma}^{\infty} (x-a)_+^{\beta} |v|^2 dx \leq \int_0^{\infty} (x-a)_+^{\beta} |v|^2 dx.
$$
So 
$$
\int_{a+\sigma}^{\infty} |v|^2 dx \leq \frac{h^2}{\sigma^{\beta}} \int_0^{\infty} |v|^2 dx.
$$
Add $\int_0^{\infty} |v|^2 dx$ to both sides and rearrange
$$
\left(1-\frac{h^2}{\sigma^{\beta}}\right) \int_0^{\infty} |v|^2 dx \leq \int_{0}^{a+\sigma} |v|^2 dx.
$$
Notice that $(b-2\d, b) \subset (0, \infty)$ and $(0, a +\sigma) \subset (0,b-2\d)$ so 
$$
\left(1-\frac{h^2}{\sigma^{\beta}}\right) \int_{b-2\d}^b |v|^2 dx \leq \left(1-\frac{h^2}{\sigma^{\beta}}\right) \int_0^{\infty} |v|^2 dx \leq \int_{0}^{a+\sigma} |v|^2 dx \leq \int_0^{b-2\d} |v|^2 dx.
$$
Therefore for $h< \sigma^{\beta/2}$
\begin{align*}
 \int_0^{b} |v|^2 dx = \int_0^{b-2\d} |v| dx + \int_{b-2\d}^b |v|^2 dx \leq \left(1+ \frac{\sigma^{\beta}}{\sigma^{\beta}-h^2}\right) \int_0^{b-2\d} |v|^2 dx \leq C \int_0^b |\phi v|^2 dx.
\end{align*}
\end{proof}

\section{Proof of Theorem \ref{lowerbound}} \label{prooflowerbound}
In this section we establish a rate of decay of the energy by adapting and improving Section 3 of \cite{BurqHitrik2007} for our particular setup. That result and our result rely heavily on an observability result by Burq and Zworski (Proposition 6.1) in \cite{BurqZworski2004}.

To prove Theorem \ref{lowerbound} we again rely on Proposition 2.4 of \cite{AnantharamanLeautaud2014} (see also \cite{BorichevTomilov2010}) which we state a variant of.
\begin{proposition}\label{BTResult2}
If there exists $C>0$ and $q_0 \geq 0$ such that 
\begin{equation}
\nm{ (-\Del + i q W(x) - q^2)^{-1}}_{L^2 \ra L^2} \leq C| q|^{\frac{1}{\alpha}-1}
\end{equation}
for all $q \in \Rb, |q| \geq q_0$ then \eqref{DWE} is stable at rate $1/t^{\alpha}$. 
\end{proposition}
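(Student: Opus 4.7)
The plan is to recast \eqref{DWE} as a first order system $\p_t U = \mathcal{A} U$ on the energy space $\mathcal{H} = H_0^1(M) \times L^2(M)$ (or the analogous no-boundary version) with
$$
\mathcal{A}\begin{pmatrix}u\\ v\end{pmatrix} = \begin{pmatrix}v\\ \Del u - Wv\end{pmatrix},
$$
so that $E(u,t) = \tfrac{1}{2}\nm{U(t)}_{\mathcal{H}}^2$, and then appeal to the Borichev--Tomilov theorem on polynomial decay of bounded $C_0$-semigroups on Hilbert spaces. Since $\mathcal{A}$ generates a contraction semigroup on $\mathcal{H}$ (dissipativity is immediate from $W \geq 0$) and $i\Rb \subset \rho(\mathcal{A})$ (the hypothesis covers $|s| \geq q_0$; the remaining finite window follows from a standard unique continuation argument for $-\Del + isW - s^2$), that theorem reduces stability at rate $1/t^{\alpha}$ to the generator resolvent bound
$$
\nm{(\mathcal{A}-is)^{-1}}_{\mathcal{H}\to\mathcal{H}} \leq C|s|^{1/\alpha}, \qquad |s| \geq s_0.
$$

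The task is thus to convert the hypothesized bound $\nm{R_s}_{L^2\to L^2} \leq K(|s|) := C|s|^{1/\alpha - 1}$ on $R_s := (-\Del + isW - s^2)^{-1}$ into this generator resolvent bound. Given $F = (f_1, f_2) \in \mathcal{H}$, solving $(\mathcal{A}-is)(u,v) = F$ yields $v = isu + f_1$ and $u = -R_s\bigl(f_2 + (W+is)f_1\bigr)$; eliminating $u$ via the first relation and substituting into the second produces the dual identity
$$
v = -R_s\bigl(is f_2 + \Del f_1\bigr).
$$
Testing $(-\Del + isW - s^2) u = g$ against $u$ and taking real parts yields $\nm{R_s}_{L^2\to H_0^1} \leq C|s|K(|s|)$, and by duality $\nm{R_s}_{H^{-1}\to L^2} \leq C|s|K(|s|)$. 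Applied to the identity for $v$ together with $\nm{\Del f_1}_{H^{-1}} \leq \nm{\nab f_1}_{L^2} \leq \nm{F}_{\mathcal{H}}$, this gives $\nm{v}_{L^2} \leq C|s|^{1/\alpha}\nm{F}_{\mathcal{H}}$. A parallel manipulation expresses $u = -R_s f_2 + (i/s) R_s \Del f_1 + (i/s) f_1$, from which $\nm{u}_{L^2} \leq CK(|s|)\nm{F}_{\mathcal{H}}$; combining this with $\nm{\Del u}_{L^2} \leq Cs^2 K(|s|)\nm{F}_{\mathcal{H}}$ read off from the equation and the integration by parts inequality $\nm{\nab u}_{L^2}^2 \leq \nm{u}_{L^2}\nm{\Del u}_{L^2}$ produces $\nm{\nab u}_{L^2} \leq C|s|^{1/\alpha}\nm{F}_{\mathcal{H}}$. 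Together these give the required resolvent bound, and then
$$
E(u,t)^{1/2} = \tfrac{1}{\sqrt{2}}\nm{T(t) U_0}_{\mathcal{H}} \leq \nm{T(t)\mathcal{A}^{-1}}_{\mathcal{H}\to\mathcal{H}} \nm{\mathcal{A} U_0}_{\mathcal{H}} \leq C t^{-\alpha}\bigl(\nm{u_0}_{H^2} + \nm{u_1}_{H^1}\bigr)
$$
completes the proof.

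The main obstacle is precisely this last conversion. The naive estimate $\nm{v}_{L^2} \leq |s|\nm{u}_{L^2} + \nm{f_1}_{L^2}$, combined with the straightforward $\nm{u}_{L^2} \leq K(|s|)\bigl(\nm{f_2}_{L^2} + |s|\nm{f_1}_{L^2}\bigr) \lesssim |s|K(|s|)\nm{F}_{\mathcal{H}}$, picks up an extra factor of $|s|$ and would yield only the weaker stability rate $t^{-\alpha/(\alpha+1)}$. Recovering the sharp power requires both the alternate identity $v = -R_s(isf_2 + \Del f_1)$, which shifts the problematic $|s|\nm{f_1}_{L^2}$ growth onto a derivative of $f_1$ that the energy norm already controls, and the $H^{-1}\to L^2$ smoothing of $R_s$, the latter itself extracted from the hypothesized $L^2\to L^2$ bound via the energy identity for $-\Del + isW - s^2$ followed by duality.
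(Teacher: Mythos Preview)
The paper does not actually prove this proposition: it is stated as a variant of Proposition 2.4 in \cite{AnantharamanLeautaud2014} (itself resting on \cite{BorichevTomilov2010}) and is then used as a black box in the proof of Theorem \ref{lowerbound}. So there is no in-paper argument to compare against.

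Your sketch is correct and is essentially the argument behind the cited references: Borichev--Tomilov supplies the equivalence between polynomial semigroup decay and the generator resolvent bound $\nm{(\mathcal{A}-is)^{-1}}_{\mathcal{H}\to\mathcal{H}}\le C|s|^{1/\alpha}$, and the remaining work is the conversion from the stationary bound $\nm{R_s}_{L^2\to L^2}\le C|s|^{1/\alpha-1}$ to that generator bound, which is the content of Lemma~4.6 in \cite{AnantharamanLeautaud2014}. Your identities $u=-R_s f_2+(i/s)R_s\Delta f_1+(i/s)f_1$ and $v=-R_s(isf_2+\Delta f_1)$, together with the $L^2\to H_0^1$ bound obtained from the energy identity and its dual $H^{-1}\to L^2$ version, recover exactly the sharp power; your remark that the naive estimate loses a factor of $|s|$ is also the point of that lemma. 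One small caveat: when invoking duality, note that $R_s^*$ equals $R_{-s}$ (the adjoint of $-\Delta+isW-s^2$ being $-\Delta-isW-s^2$), so the $H^{-1}\to L^2$ bound for $R_s$ follows because the $L^2\to H_0^1$ bound is even in $s$.
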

\begin{proof}[Proof of Theorem \ref{lowerbound}]
Consider $u \in H^2(M)$ solving 
\begin{equation}\label{stationaryinhomog}
(-\Del + i q  W - q^2) u = f \in L^2(M), \quad u|_{\p M} = 0, \quad q\gg1.
\end{equation}
Let $0 \leq \chi \in C_0^{\infty}(\Rb)$ be a cutoff function with $\chi=0$ for $|x| \geq 2$ and $\chi =1$ for $|x| \leq 1$. Now let $\chi_q=\chi(q^{\gamma} W(x))$ with $\gamma= \frac{\beta}{\beta+2}$. Note that $\chi_q$ is identically 0 for $|x|>a+\sigma/4$ for $q$ large enough since $W>0$ on $[a+\sigma, b]$. Because of this $\chi_q$ and it's derivatives are only supported where $W$ has the form $(|x|-a)_+^{\beta}$. Furthermore on the support of $\chi'_q(x)$
\begin{equation}
\label{Westimate}
W(x) \sim \frac{1}{q^{\gamma}}, \quad q\gg1.
\end{equation}
\textbf{Remark.} The proof in \cite{BurqHitrik2007} uses an analogous setup with $\gamma=1$. The key change we make is to set $\gamma = \frac{\beta}{\beta+2}$. The rest of our argument is similar to the proof in \cite{BurqHitrik2007} but we detail it for the convenience of the reader and to explain why this value of $\gamma$ is ideal. 

The function $\chi_q u$ still vanishes on $\p M$ (or if $\p M =\emptyset$ it still satisfies the periodicity condition) and satisfies on $M$
\begin{equation}
\label{cutoffu}
(-\Del - q^2) \chi_q u = \chi_q f + \chi_q'' u - 2 \p_x (\chi'_q u) - i q W(x) \chi_q u. 
\end{equation}
We apply Proposition 6.1 of \cite{BurqZworski2004} to this equation choosing the control region $\omega_x = [a+\sigma/4, a+\sigma/2]$ and setting $\omega :=w_x \times [-b,b]$ to obtain 
\begin{align} \label{chinorm}
\lp{\chi_q u}{2}^2 &\leq C \left( \nm{\chi_q f + \chi''_q u - 2 \p_x (\chi'_q u) - i q W(x) \chi_q u}_{H^{-1}_x L^2_y(M)} \right)+ \nm{\chi_q u|_{\omega}}_{L^2(\omega)}^2 \nonumber \\
&\leq C \left( \lp{\chi_q f}{2}^2 + \lp{\chi''_q u}{2}^2 + \lp{\chi'_q u}{2}^2 + \lp{q W \chi_q u}{2}^2 \right).
\end{align}
We emphasize that $\chi_q$ vanishes on $\omega$, which allowed us to drop that term. We now estimate the remaining terms on the right hand side.
Using that $W$ is exactly $(|x|-a)_+^{\beta}$ on the support of $\chi_q$ and its derivatives we obtain the following bound on the derivative of $\chi_q$,
\begin{equation}
\label{chiqcontrol}
|\chi'_q| = |q^{\gamma} \chi'_q(q^{\gamma} W(x)) W'(x)| \leq C q^{\gamma/\beta},
\end{equation}
and similarly
$$
|\chi''_q| \leq C q^{2\gamma/\beta}.
$$
Note that on the support of $\chi'_q$ and $\chi''_q$ the damping $W$ is smooth, so this computation is valid for all $\beta > 0$. 

Now write 
$$
\chi'_q u = \frac{\chi'_q W^{1/2} u}{W^{1/2}},
$$
then using \eqref{chiqcontrol} and \eqref{Westimate} 
$$
\left| \frac{\chi'_q}{W^{1/2}} \right| \leq q^{\gamma(1/2+1/\beta)},
$$
and consequently
\begin{equation}\label{firstchi}
\lp{\chi'_q u}{2}^2 \leq C q^{\gamma(1+2/\beta)} \lp{W^{1/2} u}{2}^2.
\end{equation}
We estimate the $L^2$ norm of $\chi''_q u$ in a similar way 
\begin{equation}\label{secondchi}
\lp{\chi''_q u}{2}^2 \leq O(1) q^{\gamma(1+4/\beta)} \lp{W^{1/2} u}{2}^2.
\end{equation}
Finally a similar argument shows 
\begin{equation}\label{dampingchi}
\lp{q W \chi_q u}{2}^2 \leq O(1) q^{2-\gamma} \lp{W^{1/2} u}{2}^2.
\end{equation}
The smaller the terms on the right of \eqref{firstchi}, \eqref{secondchi} and \eqref{dampingchi} are the stronger the resolvent estimate is. Because of this we would like to minimize 
$$
\max\{2-\gamma, \gamma(1+4/\beta), \gamma(1+2/\beta)\}.
$$
This is attained when 
$$
2-\gamma=\gamma(1+4/\beta),
$$ 
i..e. $\gamma=\beta/(\beta+2)$. 
Therefore \eqref{firstchi}, \eqref{secondchi}, \eqref{dampingchi} along with \eqref{chinorm} give
$$
\lp{\chi_q u}{2}^2 \leq O(1) \left( \lp{f}{2}^2 + q^{(\beta+4)/(\beta+2)} \lp{W^{1/2} u}{2}^2 \right).
$$
Now note that pairing \eqref{stationaryinhomog} with $\bar{u}$ 
\begin{equation}\label{dampingcontrol}
|q| \int W |u|^2 dx \leq \lp{f}{2} \lp{u}{2}.
\end{equation}
Therefore
$$
\lp{\chi_q u}{2}^2 \leq O(1) \left( \lp{f}{2}^2 + q^{2/(\beta+2)} \lp{f}{2} \lp{u}{2} \right).
$$
It remains to control the $L^2$ norm of $(1-\chi_q) u$. To do so we remark that $1-\chi_q$ is supported in the set where $W \geq 1/q^{\gamma}$. Using \eqref{dampingcontrol} again 
$$
\lp{(1-\chi_q) u}{2}^2 \leq q^{\gamma} \int (1-\chi_q) W |u|^2 dx \leq q^{\gamma-1} \lp{f}{2} \lp{u}{2}.
$$
Therefore 
$$
\lp{u}{2}^2 \leq O(1) \left( \lp{f}{2}^2 + q^{1/(\beta+2)} \lp{f}{2} \lp{u}{2} \right)
$$
and thus we obtain
$$
\lp{u}{2} \leq O(1) q^{2/(\beta+2)} \lp{f}{2}, \quad q \in \Rb, \quad |q| \gg1,
$$
which along with Proposition \ref{BTResult2} gives stability at the stated rate. 
\end{proof}

\bibliographystyle{alpha}
\bibliography{mybib}
\end{document}